\newtheorem{theorem}{Theorem}[section]
\newtheorem{lemma}[theorem]{Lemma}
\newtheorem{proposition}[theorem]{Proposition}
\newtheorem{corollary}[theorem]{Corollary}
\theoremstyle{definition}
\theoremstyle{remark}
\def\paragraph#1{\noindent \textbf{#1}}
\numberwithin{equation}{section}
\def\<{\langle}
\def\>{\rangle}
\def\l{\lambda}
\def\th{\theta}
\def\R{{\mathbb R}}  %%
\def\N{{\mathbb N}}  %%
\def\P{{\mathbb P}}  %% carateri piu belle per campi di nombre
\def\E{{\mathbb E}}  %%
\def\T{{\mathbb T}}
\let\cal=\mathcal
 \def \ba {\begin{array}}
 \def \ea {\end{array}}
 \def \cR {{\cal R}}
 \newcommand{\be}{\begin{equation}}
 \newcommand{\ee}{\end{equation}}
\newcommand{\bea}{\begin{eqnarray}}
 \newcommand{\eea}{\end{eqnarray}}
\def\TH(#1){\label{#1}}\def\thv(#1){\ref{#1}}
\def\Eq(#1){\label{#1}}\def\eqv(#1){(\ref{#1})}
 \def \1{\mathbbm{1}}
\newcommand\st{S_N}
\newcommand\sti{\tilde{S}_N}
\numberwithin{equation}{section}
\begin{document}
\title{Convergence of  a kinetic equation to
a fractional diffusion equation}
\author[G. Basile]{Giada Basile}
 \address{G. Basile\\Institut f\"ur Angewandte Mathematik\\Rheinische
   Friedrich-Wilhelms-Uni\-ver\-si\-t\"at Bonn\\Endenicher Allee 60\\ 53115
   Bonn,
Germany}
\email{basile@uni-bonn.de}
\author[A. Bovier]{Anton Bovier}
 \address{A. Bovier\\Institut f\"ur Angewandte Mathematik\\Rheinische
   Friedrich-Wilhelms-Uni\-ver\-si\-t\"at Bonn\\Endenicher Allee 60\\ 53115
   Bonn,
Germany}
\email{bovier@uni-bonn.de}

\subjclass[2000]{82C44,60tK35,60G70} \keywords{Anomalous diffusion, L\'evy process, Boltzmann equation, coupled oscillators, kinetic limit, heat conductance}

\date{\today}
\begin{abstract}
A linear Boltzmann equation is interpreted as
%
%We consider a linear Boltzmann equation, which describes the evolution
%of the energy density distribution of the normal modes (or phonons)
%of the system. It can be interpreted as
%
the forward equation for the probability density of a Markov process
$(K(t), Y(t))$ on $(\T\times\R)$, where $\T$ is the one-dimensional
torus.  $K(t)$ is a autonomous reversible jump process, with waiting
times between two jumps with finite expectation value but infinite
variance.  $Y(t)$ is an additive functional of $K$, defined as
$\int_0^t v(K(s))ds$, where $|v|\sim 1$ for small $k$ .  We prove that
the rescaled process $N^{-2/3}Y(Nt)$ converge in distribution to a
symmetric L\'evy process, stable with index $\alpha=3/2$.
\end{abstract}
\thanks{This research was supported in part  through the German Research
Council in the SFB 611}

\maketitle

%------------comments--------

\section{Introduction.}
The understanding of
%macroscopic
thermal conductance in both classical and quantum mechanical systems
is one of the fundamental problems of non-equilibrium statistical
mechanics. A particular aspect that has attracted much interest is the
observation that autonomous translation invariant systems in
dimensions one and two exhibit anomalously large conductivity.  The
canonical example here is a chain of anharmonic oscillators introduced
by Fermi-Pasta-Ulam (FPU)\cite{FPU}, for which numerical evidence
shows a super-diffusive spreading of energy (see \cite{LLP} for a
general review).
% Recent experiments on carbon nano-tubes show the same features.
However, the rigorous analysis of energy transport mechanism presents
serious mathematical difficulties and few results are obtained
starting from microscopic dynamics.

The canonical approach to this problem, starting with the pioneering
work of Peierls \cite{Pei} for the case of weak non-linearity, is to
derive a Boltzmann-type equation that will describe the energy
transport in a kinetic limit.
%Lattice vibrations, responsable of heat transport,
%can be  described as a dilute gas of \emph{phonons}, and the
%evolution of its density is governed by a Boltzmann equation.
Recently, this approach was carried out rigorously for weakly
anharmonic FPU chains \cite{Pe,ALS,LS}. A linear Boltzmann equation
was derived in \cite{LSrm} for the harmonic chain with random masses.
 The same linear Boltzmann equation appears also as limit
of a random Schr\"odinger equation (see for example \cite{EY},
\cite{E}, \cite{Sp}, \cite{BPR}).

In \cite{BOS} a kinetic limit was performed for a system of harmonic
oscillators perturbed by a conservative stochastic noise and the
following linear Boltzmann equation is deduced for the the energy
density distribution of the normal modes, or phonons, characterized by
a wave-number $k\in[-1/2, 1/2]$:
\begin{equation}
\label{Be}
\partial_t W(t,u,k)+v(k)\partial_u W(t,u,k)=\int_\T dk'\; R(k,k')[W(t,u,k')-W(t,u,k)].
\end{equation}
The exact form of the scattering kernel $R$ and the velocity $v$
will be given below. The crucial features are, however, that the
kernel $R$ behaves like $k^{2}$ for small $k$, $\forall k'$, and like
${k'}^{2}$ for small $k'$, $\forall k$, while $|v(k)|\rightarrow 1$,
as $|k|\downarrow 0$. This conforms to the intuitive picture that
phonons with wave number $k$ travel with a velocity $v(k)$ and are
scattered with a rate $R(k,k')$.
%The aim was to understand the features % the reasons
%of the anomalous thermal conductivity
%predicted and observed for low dimensional systems,
%looking at the behaviour  of the wave modes of the
%lattice, or \emph{phonons}.
It is well known that super-diffusive spreading of energy is connected
with the fact that the mean free path of phonons with small wave
number $k$ has a macroscopic length (ballistic transport), which
follows essentially from the smallness of the rate with which these
phonons are scattered, together with the fact that they travel with
finite velocity. In fact, in \cite{BOS} it is proved that this system
exhibits anomalous conductance.

To analyse the Boltzmann equation (\ref{Be}) is to exploit the fact that
it can be reinterpreted as the forward equation for the probability density
of a Markov process
 $(K(t), Y(t))$ on $(\T
 \times\R)$.  Here $K(t)$ is a reversible jump process with rate  $R$
 and $Y(t)$ is an additive functional of $K$, given  as
 $Y(t)=\int_0^tds\;v(K(s))$. In a phononic picture, the process $Y(t)$
 describes the trajectory of a phonon.  For system with   diffusive
 energy spreading, one expects that the law of the rescaled process
 $Y(Nt)/\sqrt{N}$ converges to the solution of heat equation. On the
 other hand, we expect that this is not true for systems exhibiting
 ballistic transport.

 To understand heuristically what is to be expected, it is convenient
 to introduce a discrete time Markov chain, $X_i$, that records the
 sequence of values assumed by the continuous time chain $K(t)$ and
 the holding times $\tau(X_i)$, i.e. the time the chain $K(t)$ remains
 in the state visited in step number $i$. Then the process $Y(t)$, at
 the time of the $n$-th jump of $K(t)$, can be written as
 $S_n=\sum_{i=1}^{n} \tau(X_i)v(X_i)$.  We will see later that in our
 case, due to the fact that the transition kernel behaves as ${k'}^2$
 for small $k'$ with respect to the second argument, the stationary
 distribution of the chain $X_i$ is of the form $\pi(dk)=\phi(k)dk$,
 where $\phi(k)\sim k^2$ for $k<<1$. On the other hand, the
 distribution of the holding time $\tau(k)$ is of the form
 $\P(\tau(k)>s)\sim e^{-sk^2}$. Hence, in the stationary distribution,
 we have that
\begin{equation}\label{heu.1}
\P\big(\tau(X_i)>s\big) \sim \int \pi(dk) e^{-sk^2} \sim s^{-3/2}.
\end{equation}
Thus, since $v$ is antisymmetric and $|v(0)|=1$, we expect
$\tau(X_i)v(X_i)$ to be in the domain of attraction of a stable law of
index $\alpha=3/2$. Then, if the $\tau(X_i)v(X_i)$ were independent
random variables, $n^{-1/\alpha}S_n$ would converge to the
corresponding stable law and $n^{-1/\alpha}S_{[ns]}$ to a stable
L\'evy process.

The corresponding problems of the convergence of dependent random variables
 with heavy tail distributions to a stable process has been studied extensively in the literature. A general,  and very efficient,
 approach is explained e.g. in Durrett and
 Resnick \cite{DR}: first one uses methods from the theory of extremes of
dependent random variables to study the convergence of the point process of
scaled summands to a Poisson point process.
Then one writes the sum as an integral with respect to the point process
 and  uses a moment method to
show that this integral converges to the corresponding integral with respect
to the Poisson point process, which is a L\'evy process.

The conditions needed to establish such a result are mainly required
to assure the convergence to the Poisson process. Durrett and Resnick
\cite{DR} express these in terms of the asymptotic behaviour of the
conditional distribution.  Davis \cite{Da} considers stationary
processes and the corresponding well known mixing conditions (see
\cite{LLR}), while in \cite{JK} and \cite{DJ} a non stationary
generalisation is done, requiring some mixing conditions. Further
results are to be found in e.g.  \cite{DH,J1,J2}.

 In this paper, we will follow the general strategy outlined above and
 use a criterion for Poisson convergence that can be found in in
 \cite{BK} and which has the advantage of being rather easily verified
 %in our context.
 for an ergodic Markov chain. It requires asymptotic factorisation
 conditions for probabilities which hold on average.  This will allow
 us to prove convergence first of
 the sum $n^{-1/\alpha}S_{[nt]}$ to an $3/2$-stable L\'evy process in
 the $J_1$-Skorokhod topology. Since $S_n =Y(T_n)$, where
 $T_n=\sum_{i=1}^n\tau(X_i)$, and since $Y(t)$ is the piecewise linear
 interpolation of this function at the random sequence to times $T_n$,
 once we will have shown that $T_{[nt]}/n$ converges to $t$, we will
 obtain that
% where $n(t)$ is a non decreasing
% right-continuous function with integer values.
%
 the rescaled process $n^{-1/\alpha}Y(nt)$
 converges to the same limit process in the
 Skorokhod $M_1$-topology, which is the appropriate topology for the
 convergence of continuous process to a process with c\`adl\`ag paths.

%In \cite{JKO}
%the authors consider the same model, defined by \ref{Be}.
%They use the martingale approximation approach for an additive
%functional of a Markov chain and a coupling techniques to get the
%convergence of rescaled sum $S_n$ to an $\alpha-stable$ L\'evy
%process. Convergence of $Y(t)$ is not proved.

Let us note that Jara et al \cite{JKO} prove
similar convergence results for additive
 functionals of Markov processes using Martingale and coupling
 methods and apply their
 results to the same model we consider here.  Their methods
 do, however, only yield convergence of finite dimensional marginals.
 The methods we use here seem more straightforward  and direct, and give
 stronger results.

 Finally, using convergence of $n^{-1/\alpha}Y(nt)$ to an
 $\alpha$-stable L\'evy process, $\alpha\in(1,2)$, one can prove that
 the rescaled solution of the linear Boltzmann equation (\ref{Be})
%$W(Nt,N^{1/\alpha}u, k)$
converges to the solution of the following
 fractional diffusion equation
\begin{equation*}
\partial_t\bar{W}(t,u)=-(-\partial_u^2)^{\alpha/2}\bar{W}(t,u).
\end{equation*}
We refer to \cite{JKO} for the proof. Convergence of
the rescaled solution of a linear Boltzmann equation to the solution
of a fractional diffusion equation was independently proved by Mellet
et al \cite{MMM}, with purely analytical techniques.

\section{The model}
We consider the process $(K(t), Y(t))$ described by equation
(\ref{Be}). Denoting by $\T$ the one-dimensional torus, we choose
$v:\T\to\R$ and $R: \T\times\T\to\R_+$ as in \cite{BOS}, namely
\begin{equation}\label{v}
v(k)  = \frac{\sin(2\pi k)}{|\sin(\pi k)|},
\end{equation}
\begin{eqnarray}
\label{def:kernel}\nonumber
R(k,k')  =\frac{4}{3}\left[2\sin^2(2\pi k)\sin^2(\pi k')+
2\sin^2(2\pi k')\sin^2(\pi k) -\sin^2(2\pi k)\sin^2(2\pi k')\right].\\
\end{eqnarray}
Observe that the rate kernel $R$ is symmetric, not negative and it is
equal to zero only if $k=0$ or $k'=0$.  We remark that despite the
special case we consider, results depend essentially on the behaviour
of $v$ and $\R$ for small $k$, i.e.  for $k\ll 1$ $|v(k)|\sim 1$ and
$R(k,k')\sim k^{-2}$, $\forall k'\in\T/\{0\}$, and $R(k,k')\sim
k'^{-2}$ for $k\ll 1$, $\forall k\in\T/\{0\}$.
The jump process $K(t)$
%, with value in $\T$
is determined by the
generator
\begin{equation}
Lf(k)=\int_\T dk'\; R(k,k')[f(k')-f(k)],\hspace{0.4cm}\forall
f\in B(\T).
\end{equation}
The process $Y(t)$ is an additive functional of $K(t)$, defined as
$Y(t)=\int_0^t ds\; v(K(s))$.

%Let us focus on the jump process $K(t)$.
Disregarding the time, the stochastic sequence $\{X_i\}_{i\geq 0}$ of
states visited by $K(t)$ is a Markov chain with value in $\T$, with a
probability kernel $P$ concentrated on $\T$ given by
\begin{equation}
P(k,dk')=\phi(k)^{-1}R(k,k')dk'
\end{equation}
 where $\phi:\T\to\R_+$ is given by
\begin{equation}
\label{def:phi}
\phi(k)=\int_\T dk'\;R(k,k')
=\frac{4}{3}\sin^2(\pi k)(1+2\cos^2(\pi k)).
\end{equation}
% We will denote with
%$p(k,k')=\phi(k)^{-1}R(k,k')$ the density, which has the following form:
%\begin{equation}\label{prob}\begin{split}
%&p(k,k')
%%=\phi(k)^{-1}R(k,k')\\ &
%=
%8\frac{\cos^2(\pi k)}{1+2\cos^2(\pi k)}\sin^4(\pi k')+
%8\frac{\sin^2(\pi k)}{1+2\cos^2(\pi k)}\sin^2(\pi k')\cos^2(\pi k').
%\end{split}\end{equation}
We denote with $P^m$, $m\geq 2$, the $m$-th
convolution integral of $P$.
%
%Of course $P^m(k,dk')=p^m(k,k')dk'$,
%where $p^m(k,k')dk'$ is
%defined by the recoursive formula
%$$p^{m+1}(k,k')=p^m(k,\cdot)\circ p(\cdot,k').$$
Since the probability kernel $P$ is regular and strictly positive and
defined on a compact set, it is ergodic, i.e. there exists a strictly
positive probability distribution $\pi$ such that
%for every initial measure $\gamma:\T\to [0,1]$
%
$
\forall k\in\T$, $P^n(k,\cdot)\to \pi(\cdot)$, weakly, as $n\uparrow\infty$.
The stationary measure $\pi$ is given by $\pi(dk)=\phi(k)dk$.

We define two functions of the Markov chain $\{X_i\}_{i\geq 0}$: the
clock process, $T_n\in\R^+$, and the position process, $S_n\in\R$, by
\begin{equation}\label{def:T}
T_n=  \sum_{i=0}^{n-1}e_i[\phi(X_i)]^{-1},
\end{equation}
and
\begin{equation}\label{def:S}
S_n= \sum_{i=0}^{n-1}e_i[\phi(X_i)]^{-1}v(X_i).
\end{equation}
Here $\{e_i\}_{i\geq 0}$ are i.i.d. exponential random variables with
parameter $1$.  The clock process, $T_n$, is the time of the $n$-the jump of
the
process $K(t)$.  It is a sum of positive random
variables with finite expectation, as one can easily check using the
explicit form of the probability density (see Eq.
%(\ref{prob}),
(\ref{form:prob}) below).  The position process, $S_n$, is  the
value of the position of  $Y(t)$ at time $T_n$, i.e. $S_n=Y(T_n)$.
It is a sum of real variables with zero mean and infinite
variance. More precisely, for any $i\in \N$, for large $\lambda$
\begin{equation}\label{htd}
\P\Big[\big|e_i\phi(X_i)^{-1}v(X_i)\big|> \lambda\Big]\sim\lambda^{-3/2},
\end{equation}

%%%%%%%%%%%%%%%%%%%%%%%%%%%%%%%%%%%%%%%%%%

Let  $T^{-1}$ denote the right-continuous inverse of $T_n$, i.e. let
\begin{equation}\label{def:T-1}
T^{-1}(t)\equiv \inf\{n:T_n\geq t\}.
\end{equation}
We can
represent the original processes, $(K(t), Y(t))$, as follows:
\begin{equation}\label{K,Y}
\begin{split}
K(t)= & X_{[T^{-1}(t)-1]}\\
Y(t)= & S_{[T^{-1}(t)-1]} + v(X_{[T^{-1}(t)-1]})(t-T_{[T^{-1}(t)-1]}).
\end{split}.
\end{equation}
In particular,  $Y(t)$  is the function defined by
linear interpolation between its values $S_{n}$ at the random points
$T_n$  (we take  $S_0=0$).

%%%%%%%%%%%%%%%%%%%%%%%%%%%%%%%%%%%%%%%%%%%%%%%

\section{Main results. }
We assume that the initial distribution, $\mu$, of the process $X$
satisfies the condition
\begin{equation}\label{ass:mu}
\int_T d\mu(k) k^{-2}<\infty,
\end{equation}
which guarantees in particular that $\E_\mu\left[e_0\phi(X_0)^{-1}\right]
<\infty$.

We define  the rescaled processes
\begin{equation}\label{def:TN.SN}\begin{split}
T_N(\theta)=\frac{1}{N}T_{\lfloor N\theta \rfloor},\hspace{0.4cm}
S_N(\theta)=\frac{1}{N^{2/3}}S_{\lfloor N\theta \rfloor},\hspace{0.4cm}
T_N^{-1}(\theta)=\frac{1}{N}T^{-1}(N\theta),
\end{split}\end{equation}
where $\lfloor \cdot\rfloor$ denotes the lower integer part of
$\cdot$.  Since $T_n$ is a sum of positive variables with finite
expectation, we expect that both $T_N(\theta)$ and $T_N^{-1}(\theta)$
converge in probability (and thus in distribution) to $\theta$, in the
topology of uniform convergence on compact intervals.  This will be
proved in Proposition \ref{conv:TN}.

On the other hand, $S_n$ is a sum of centred random variables whose
tail behaviour is given in (\ref{htd}). Thus we expect that the
rescaled process $S_N$ converges to a stable process with index $3/2$.
This is the content of the following theorem.

\begin{theorem}\label{theo:conv}
  Let $S_N$ be the process defined in $(\ref{def:TN.SN})$.  Then for
  any $0<\mathcal{T}<\infty$, the process $\{S_N(\theta)\}_{0\leq
    \theta\leq \mathcal {T}}$ converges to $\{V(\theta)\}_{0\leq
    \theta\leq\mathcal{T}}$, where $V$ is a symmetric
L\'evy process stable with index $3/2$.
%with zero drift and L\'evy measure $\nu(dx)=|x|^{-5/2} dx$.
  Convergence is in distribution on the Skorokhod space of c\`adl\`ag
  functions equipped with the $J_1-topology$.
\end{theorem}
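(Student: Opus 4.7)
Following the strategy described in the introduction, I would prove Theorem \ref{theo:conv} via the point-process approach of Durrett--Resnick \cite{DR}, using the Bovier--Kurkova criterion \cite{BK} for Poisson convergence of sums over an ergodic Markov chain. Write $\xi_i:=e_i\phi(X_i)^{-1}v(X_i)$, so $S_N(\theta)=N^{-2/3}\sum_{i=0}^{\lfloor N\theta\rfloor-1}\xi_i$, and introduce on $[0,\mathcal{T}]\times(\R\setminus\{0\})$ the marked point process
\begin{equation*}
\mathcal{N}_N\;:=\;\sum_{i=0}^{\lfloor N\mathcal{T}\rfloor}\delta_{(i/N,\,\xi_i/N^{2/3})}.
\end{equation*}
The goal is to prove that $\mathcal{N}_N$ converges vaguely in distribution to a Poisson point process $\mathcal{N}^*$ with intensity $d\theta\otimes\nu(dx)$, where $\nu(dx)=c|x|^{-5/2}dx$ is the symmetric L\'evy measure of the $3/2$-stable law, and then to write $S_N$ as an integral against $\mathcal{N}_N$ plus a small-jump correction. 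The first-moment input is immediate: because $\phi(k)\sim \tfrac{4}{3}\pi^2 k^2$ and $|v(k)|\to 1$ as $k\to 0$, a change of variables under $\pi(dk)=\phi(k)dk$ gives $\P_\pi[|\xi_0|>\lambda]\sim c_0\lambda^{-3/2}$ (matching \eqref{htd}), and hence $N\,\P_\pi[\xi_0/N^{2/3}\in A]\to\nu(A)$ for Borel $A$ bounded away from $0$; moreover, since $\phi$ is even and $v$ is odd, $\xi_0$ is symmetric under $\pi$ and the sums are automatically centered despite $\alpha>1$.

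\textbf{Point-process convergence.} The main analytic content is the averaged factorisation condition of \cite{BK}: for each such $A$,
\begin{equation*}
\sum_{0\le i<j\le N\mathcal{T}}\Bigl|\,\P[\xi_i/N^{2/3}\in A,\,\xi_j/N^{2/3}\in A]\;-\;\P[\xi_i/N^{2/3}\in A]\,\P[\xi_j/N^{2/3}\in A]\,\Bigr|\;\longrightarrow\;0.
\end{equation*}
I would split pairs by lag $m=j-i$. The event $\{\xi_i/N^{2/3}\in A\}$ essentially forces $X_i$ to lie within $O(N^{-1/3})$ of the origin, which has $\pi$-probability $O(N^{-1})$. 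For short lags $m\le R_N$, an elementary estimate on the transition density $P(k,dk')=\phi(k)^{-1}R(k,k')dk'$, whose restriction at small $k$ is bounded and of order $(k')^2$ near $k'=0$, gives joint probability $O(N^{-2})$ and hence short-lag contribution $O(R_N/N)$. For long lags $m>R_N$, strict positivity of $R$ on $\{|k|,|k'|>\delta\}$ plus compactness of $\T$ yield geometric ergodicity of $P$ and therefore exponentially decaying two-point correlations, contributing $O(N^2 e^{-\gamma R_N})$; a choice $R_N=C\log N$ balances both. The initial-distribution hypothesis \eqref{ass:mu} ensures that relaxation from $\mu$ to $\pi$ is harmless on the scale $N$. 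I expect this step to be the main obstacle: the chain makes long excursions in the small-$k$ region precisely where the heavy-tailed values of $\xi_i$ live, so one must rule out spurious clustering of large increments uniformly as $A$ approaches $\{0\}$, and the factor $N^2$ in the long-lag bound leaves little slack, forcing the careful quantitative use of the explicit form of $R(k,k')$ near the singularity.

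\textbf{Truncation and assembly.} Once $\mathcal{N}_N\Rightarrow \mathcal{N}^*$ is established, put $S_N^{(\ve)}(\theta):=\int_{[0,\theta]\times\{|x|>\ve\}}x\,d\mathcal{N}_N(s,x)$. For each fixed $\ve>0$, the functional $\mathcal{N}\mapsto\int x\,\1_{|x|>\ve}\,d\mathcal{N}$ is $\mathcal{N}^*$-a.s.\ continuous in the $J_1$ topology (almost surely no atom sits on $\{|x|=\ve\}$), so the continuous mapping theorem gives $S_N^{(\ve)}\Rightarrow V^{(\ve)}$, the truncated compound-Poisson part of the limit $V$. The stationary bound
\begin{equation*}
\E_\pi\bigl[\xi_0^2\,\1_{|\xi_0|\le \ve N^{2/3}}\bigr]\;\le\;C(\ve N^{2/3})^{1/2},
\end{equation*}
together with the covariance control coming from the mixing estimate of the previous step, bounds the $L^2$ norm of the small-jump remainder $S_N-S_N^{(\ve)}$ at any $\theta\le\mathcal{T}$ by $C\sqrt{\mathcal{T}}\,\ve^{1/4}$, uniformly in $N$. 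Sending $\ve\to 0$, since also $V^{(\ve)}\to V$ in $J_1$, a standard approximation argument (e.g.\ Billingsley, Theorem 3.2) yields $S_N\Rightarrow V$ in the Skorokhod $J_1$ topology on $[0,\mathcal{T}]$, as claimed.
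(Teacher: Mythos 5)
Your overall architecture is the same as the paper's: verify the Bovier--Kurkova criterion for Poisson convergence of the rescaled marks using the ergodicity of the chain $X_i$ and the explicit behaviour of $R$ near $k=0$, then recover $S_N$ as the big-jump integral against the point process plus a small-jump remainder controlled in $L^2$ (the paper's Lemma \ref{proof:S-}, Proposition \ref{prop:PPP} and Proposition \ref{levy-ito}). Two remarks on that part. First, the criterion you invoke is not just the pairwise averaged factorisation you wrote: Theorem 2.1 of \cite{BK}, as used here, requires the convergence of the sums of \emph{joint} tail probabilities over ordered $\ell$-tuples of distinct indices to the product form for \emph{every} $\ell$; your lag-splitting argument would have to be iterated to all orders (this is exactly what the paper does, using the closed form (\ref{form:prob}) of $p^m$ rather than abstract geometric ergodicity), so as written this step is incomplete though repairable in the same spirit. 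Second, note that the paper does not need any mixing bound for the small-jump variance: the cross terms vanish identically because $\psi(k)=-\psi(-k)$ and $p^\ell(k,k')=p^\ell(k,-k')$, which is cleaner than the covariance/mixing route you sketch.

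The genuine gap is in your final assembly. You control the small-jump remainder $S_N-S_N^{(\ve)}$ only at fixed $\theta$ (a second-moment bound of order $\ve^{1/2}$), and then appeal to a converging-together argument. But to conclude $S_N\Rightarrow V$ in $J_1$ from $S_N^{(\ve)}\Rightarrow V^{(\ve)}$ you need
\begin{equation*}
\lim_{\ve\downarrow 0}\ \limsup_{N\to\infty}\
\P\Bigl[\ \sup_{0\le\theta\le\mathcal{T}}\bigl|S_N(\theta)-S_N^{(\ve)}(\theta)\bigr|>\delta\ \Bigr]=0,
\end{equation*}
i.e.\ a maximal (uniform in $\theta$) estimate, and a pointwise $L^2$ bound does not deliver it: the truncated increments are uncorrelated but neither independent nor a martingale, so no Doob- or Kolmogorov-type inequality applies for free, and crude orthogonal-increment maximal inequalities introduce factors that destroy the $\ve\to0$ limit. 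This is precisely why the paper proves the fourth-moment estimate on products of increments over disjoint blocks (Lemma \ref{ESS}) and runs the Billingsley Theorem 10.3/13.2 machinery in Section 7, both for the truncated part and for the probability that two large jumps occur within a short time window (via Lemma \ref{lemma1}); equivalently, it proves finite-dimensional convergence plus tightness rather than relying on a fixed-$\theta$ remainder bound. Without an argument of this type your proof establishes convergence of the one-dimensional (and, with the all-$\ell$ criterion, finite-dimensional) distributions, but not the functional $J_1$ convergence claimed in the theorem.
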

Combining this theorem with Proposition \ref{conv:TN},  we will prove
that $S_N\circ
T_N^{-1}$ converges in distribution to $V$. This will imply our  main
theorem.
%
%---------MAIN THEOREM--------------------
%
\begin{theorem}\label{theo:conv2}
Let $S_N$, $T_N^{-1}$ be the processes defined in (\ref{def:TN.SN}).
For every $0<\mathcal{T}<\infty$, the process
$
\{S_N(T_N^{-1}(\theta))\}_{0\leq \theta\leq\mathcal{T}}$, converges
to $\{V(\theta)\}_{0\leq \theta\leq\mathcal{T}}$,
where $V$ is a symmetric L\'evy process stable with index $3/2$.
Convergence is in distribution
on the Skorokhod space of c\`adl\`ag functions equipped with the
$J_1-topology$.

Moreover, for every $N\geq 1$ the process
$Y_N(t)=\frac{1}{N^{2/3}}\int_0^{Nt}ds\; v(K_s)$ is the function
defined by linear interpolation between its values $S_N(\theta)$ at
points $T_N(\theta)$, with $\theta\in[0,T_N^{-1}(\mathcal{T})]$.
In particular,  $Y_N$ converges to $V$ in
distribution in the $M_1$-Skorokhod  topology.
\end{theorem}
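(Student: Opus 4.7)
The plan is to deduce both assertions from Theorem \ref{theo:conv} and Proposition \ref{conv:TN}, by combining the continuous mapping theorem in Skorokhod spaces with a direct $M_1$-comparison between $Y_N$ and $S_N\circ T_N^{-1}$.

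First I would establish joint convergence $(S_N,T_N^{-1})\Rightarrow(V,\iota)$ in $D([0,\mathcal{T}])\times D([0,\mathcal{T}])$ equipped with the $J_1$-topology, where $\iota(\theta)=\theta$. Theorem \ref{theo:conv} gives $S_N\Rightarrow V$, and Proposition \ref{conv:TN} gives $T_N^{-1}\to\iota$ in probability, uniformly on $[0,\mathcal{T}]$; since the limit $\iota$ is deterministic, joint convergence is a standard Slutsky-type consequence. The composition map $(f,g)\mapsto f\circ g$ is continuous at $(f,g)$ whenever $g$ is continuous and strictly increasing (see, e.g., Whitt, \emph{Stochastic-Process Limits}, Theorem~13.2.4). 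Since $\iota$ has this property, the continuous mapping theorem yields $S_N\circ T_N^{-1}\Rightarrow V\circ\iota=V$ in $J_1$, which is the first assertion.

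For the second assertion, from the representation (\ref{K,Y}) the process $Y_N$ agrees with $S_N\circ T_N^{-1}$ at every rescaled jump time $T_n/N$ and interpolates linearly between consecutive such times, whereas $S_N\circ T_N^{-1}$ is piecewise constant with a single jump at the right endpoint of each interval. Since $J_1$-convergence implies $M_1$-convergence, it suffices to show $d_{M_1}(Y_N,S_N\circ T_N^{-1})\to 0$ in probability. Matching suitable parametric representations of the two completed graphs on $[0,\mathcal{T}]$ shows that their $M_1$-distance is dominated by the maximal rescaled interpolation width,
\[
\Delta_N=\frac{1}{N}\max_{1\leq n\leq T^{-1}(N\mathcal{T})}\frac{e_{n-1}}{\phi(X_{n-1})}.
\]
To show $\Delta_N\to 0$ in probability I would invoke the heavy-tail estimate (\ref{htd}), which implies $\P\{e_{n-1}/\phi(X_{n-1})>\lambda\}\sim\lambda^{-3/2}$, together with the bound $T^{-1}(N\mathcal{T})=O(N)$ from Proposition \ref{conv:TN}. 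The maximum of $O(N)$ heavy-tailed terms with index $3/2$ scales as $N^{2/3}$, so $\Delta_N=O(N^{-1/3})\to 0$.

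The main technical obstacle is the rigorous $M_1$-comparison via an explicit parametric representation of the completed graphs of $Y_N$ and $S_N\circ T_N^{-1}$, exploiting that both functions coincide at the knots $T_n/N$. The extreme-value step for $\Delta_N$ is comparatively easy; in fact the Poisson point-process convergence that underlies the proof of Theorem \ref{theo:conv} already controls the joint behaviour of the largest of the variables $\{e_{n-1}/\phi(X_{n-1})\}_{n\leq cN}$, and hence yields $\Delta_N\to 0$ essentially for free.
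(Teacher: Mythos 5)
Your proposal is correct in substance, but it takes a genuinely different route from the paper for the first assertion. The paper does not use a composition/continuous-mapping argument: it invokes the ``convergence together'' theorem (Theorem 3.1 of \cite{Bi}) and estimates the $J_1$-distance $\rho(S_N,S_N\circ T_N^{-1})$ directly, splitting on the event $\{\sup_{t}|T_N^{-1}(t)-t|\le\sigma\}$ (whose complement is small by Proposition \ref{conv:TN}) and controlling the resulting oscillation $\sup\{|S_N(\lambda_N(t))-S_N(s)|:|t-s|\le\sigma\}$ by the modulus bound (\ref{ii1}) already proved for tightness. Your route via joint convergence $(S_N,T_N^{-1})\Rightarrow(V,\iota)$ and Whitt's composition theorem is more modular and avoids reusing the tightness estimates, at the price of importing that theorem; both routes share the small domain issue that $T_N^{-1}(\theta)$ may exceed $\mathcal{T}$, so one should really work on $[0,\mathcal{T}']$ with $\mathcal{T}'>\mathcal{T}$ and restrict. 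For the second assertion the paper merely cites Whitt, Section 6.2, on the asymptotic equivalence of a partial-sum process and its linear interpolation and remarks that the extension to the random interpolation times is easy because the holding times have finite mean; your explicit $M_1$-comparison makes precise what the paper leaves to the reader, which is a point in its favour.

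One caveat on that comparison: the bound $d_{M_1}(Y_N,S_N\circ T_N^{-1})\le\Delta_N$ is not quite correct as stated. The two paths coincide at the knots $T_n/N$, but generally not at the right endpoint $\mathcal{T}$, and in the $M_1$ metric the terminal values must agree up to the distance; if a macroscopically large jump straddles $N\mathcal{T}$, the discrepancy there is of order of that jump and is not controlled by the time mesh $\Delta_N$. The repair is routine: the probability that the increment straddling $N\mathcal{T}$ exceeds $\delta N^{2/3}$ tends to zero (a first-moment estimate in the spirit of Lemma \ref{mom1} gives order $N^{-1/3}$), so in fact $d_{M_1}(Y_N,S_N\circ T_N^{-1})\le\max\bigl(\Delta_N,o_P(1)\bigr)\to 0$; combined with your union-bound estimate for $\Delta_N$ (which only needs the marginal tails (\ref{htd}) and $T^{-1}(N\mathcal{T})=O_P(N)$ from Proposition \ref{conv:TN}), and with the fact that $J_1$-convergence implies $M_1$-convergence, your argument for the second assertion then goes through.
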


%\begin{remark} Jara et al  \cite{JKO} prove similar convergence results
%for additive functionals of Markov processes using different methods.
%Their methods do, however, only yield convergence of finite dimensional
%marginals. Our methods are more direct, following the construction of L\'evy
%processes as presented by It\^o \cite{Ito} combined with methods from the
%theory of extremes of dependent random variables, and yield the
%tightness results naturally.
%\end{remark}

\section{Sketch of the proof of Theorem \ref{theo:conv}}

In this section we present the key steps of the proof of Theorem 1.
The technical details will be given in Sections 5, 6, and 7.
As we mentioned in the introduction, we follow the strategy of
considering the sequence of the point processes associated to $S_N$.
At the first step we define this sequence of point processes and we
show that it converge to a Poisson point process.  Then we prove that
the limit process for $S_N$ exists and it is a L\'evy process stable
with index $3/2$. Finally, we prove the tightness for the sequence
$S_N$.
%%%%%%%%%%%%%%%%%%%%%%%%%%%%%%%%%%%%%%%%%%%%%%%%%%%%%%%%%%%
%
\subsection{Point processes.}
Define the real valued random variables
\begin{equation}\label{def:psi}
\psi_n\equiv \phi(X_n)^{-1}v(X_n),\hspace{0.4cm}n\in\N_0.
\end{equation}

For some fixed  $c>0$,
we decompose  $\st(\theta)$ into two parts,
$
\st(\theta)=\st^>(\theta)+\st^<(\theta),
$
where
\begin{equation}\label{def:S+S-}\begin{split}
\st^>(\theta)=N^{-2/3}\sum_{n=0}^{[N\theta]-1}e_n\psi_n\1_{\{e_n|\psi_n|>cN^{2/3}\}}
\\
\st^<(\theta)=N^{-2/3}\sum_{n=0}^{[N\theta]-1}e_n\psi_n\1_{\{e_n|\psi_n|\leq
cN^{2/3}\}}.
\end{split}\end{equation}
We will see later that
$\st^<(\theta)$ vanishes  as  $N\to\infty$  and then   $c\to 0$.
More precisely:
%-------------LEMMA S-----------------------
%
\begin{lemma}\label{proof:S-}
Let $\st^<(\theta)$, $\theta\in[0,\mathcal{T}]$ be the process defined in
(\ref{def:S+S-}). Then
for every $\theta\in[0,\mathcal{T}]$
\begin{equation}\label{S-}
\E\left[\big|\st^<(\theta)\big|^2\right]\leq C_0 \theta \sqrt c
+ C_1 \theta N^{-1/3},
\end{equation}
where $C_0, C_1 <\infty$ are positive constant.
\end{lemma}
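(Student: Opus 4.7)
I expand $\E[|\st^<(\theta)|^2]$ as a diagonal plus an off-diagonal contribution and bound them separately. The diagonal bound will supply both terms on the right-hand side of (\ref{S-}), while the off-diagonal bound will be shown to be of smaller order, dominated by the $C_0\theta\sqrt c$ term, exploiting the odd symmetry of $\phi^{-1}v$ and the uniform ergodicity of $\{X_n\}$.

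Writing $F_n := e_n\psi_n \1_{\{e_n|\psi_n|\leq cN^{2/3}\}}$, one has
\begin{equation*}
\E[|\st^<(\theta)|^2] \;=\; N^{-4/3}\sum_{n=0}^{[N\theta]-1}\E[F_n^2]\;+\;2N^{-4/3}\sum_{0\leq n<m\leq [N\theta]-1}\E[F_n F_m].
\end{equation*}
For the diagonal, I would apply the layer-cake identity $\E[Z^2 \1_{\{Z\leq A\}}] = 2\int_0^A s\,\P[Z>s]\,ds - A^2 \P[Z>A]$ with $Z=|e_n\psi_n|$ and $A=cN^{2/3}$, then use the tail estimate (\ref{htd}), $\P[|e_n\psi_n|>s]\leq K s^{-3/2}$ for $s\geq 1$, and split the integral at $s=1$. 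A direct calculation gives $\E[F_n^2]\leq \tilde{C}_0\sqrt{A}+\tilde{C}_1 = \tilde{C}_0 c^{1/2}N^{1/3}+\tilde{C}_1$. Summing the $[N\theta]$ diagonal contributions and dividing by $N^{4/3}$ produces exactly the two terms $C_0\theta\sqrt c$ (from the leading $\sqrt A$ piece) and $C_1\theta N^{-1/3}$ (from the constant remainder), as required by (\ref{S-}).

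For the off-diagonal, the independence of the i.i.d.\ exponentials $\{e_n\}$ from the chain $\{X_n\}$ allows one to condition on the chain and obtain
\begin{equation*}
\E[F_n F_m] \;=\; \E[g_N(X_n)\, g_N(X_m)],\qquad g_N(k) := \phi(k)^{-1}v(k)\,\E\bigl[e_0 \1_{\{e_0\leq cN^{2/3}\phi(k)/|v(k)|\}}\bigr].
\end{equation*}
Since $v$ is odd and $\phi$ is even, $g_N$ is odd; and because $\pi(dk)=\phi(k)dk$ is symmetric, $\pi(g_N)=0$ identically. The kernel $P$ is reversible with respect to $\pi$, and the structure of $R$ (continuous, vanishing only on the coordinate axes, with the $k$-singularity of $\phi(k)^{-1}$ exactly cancelling that of $R(k,\cdot)$ near $k=0$) yields a Doeblin minorization, hence a spectral gap $1-\rho>0$ in $L^2(\pi)$. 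Combining this with the Markov property and $\pi(g_N)=0$ yields $|\E[g_N(X_n)g_N(X_m)]|\leq C\rho^{m-n}\|g_N\|_{L^2(\pi)}^2$, and by Jensen $\|g_N\|_{L^2(\pi)}^2\leq \E_\pi[F_0^2] = O(c^{1/2}N^{1/3})$. Summing the geometric series in $m-n$, the off-diagonal contribution divided by $N^{4/3}$ is of order $c^{1/2}\theta$ and is absorbed into $C_0\theta\sqrt c$.

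\textbf{Main obstacle.} The delicate point is that the initial distribution $\mu$ need not be the invariant measure $\pi$, so both (\ref{htd}) and the $L^2(\pi)$ spectral gap argument must be transferred to the marginals $\mu P^n$ uniformly in $n$. The moment condition (\ref{ass:mu}) guarantees $\E_\mu[|\psi(X_0)|]<\infty$, handling the first step, and the Doeblin minorization provides, for $n\geq 1$, a uniform-in-$n$ bound on the Radon--Nikodym density of $\mu P^n$ with respect to $\pi$. With this in place, the stationary estimates transfer to the non-stationary setting with harmless multiplicative constants, and (\ref{S-}) follows.
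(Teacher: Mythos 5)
Your proposal is correct in substance, but it handles the off-diagonal terms by a genuinely different (and heavier) route than the paper. For the diagonal you use the tail bound (\ref{htd}) plus a layer-cake computation, while the paper computes $\E\big[|e_n\psi_n|^2\1_{\{e_n|\psi_n|\leq cN^{2/3}\}}\big]$ directly from the explicit $m$-step density (\ref{form:prob}); both give the same bound $C_0\sqrt c\,N^{1/3}+C_1$, so this is only a cosmetic difference (note that uniformity in $n$ of the tail bound is only clear for $n\geq 1$ from (\ref{form:prob}); the $n=0$ term should be treated separately via $\E[Z^2\1_{\{Z\leq A\}}]\leq A\,\E[Z]$ and (\ref{ass:mu}), which is the point you flag but do not quite carry out). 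The real divergence is in the cross terms: the paper observes in (\ref{simm}) that they vanish \emph{identically}, because $\psi(k)=-\psi(-k)$ while $p^\ell(k,k')=p^\ell(k,-k')$ for all $\ell\geq 1$ (the density (\ref{form:prob}) depends on $k'$ only through $\sin^2(\pi k')$ and $\cos^2(\pi k')$), and the truncation indicator is even in $k'$; so the innermost integral over $k'$ is an odd function against an even density and equals zero. You instead only use $\pi(g_N)=0$ and invoke reversibility, a Doeblin minorization, a spectral gap, and a uniform bound on the density of $\mu P^n$ relative to $\pi$ to get a geometrically decaying correlation bound of total size $O(\theta\sqrt c)$, absorbed into the first term. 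That route does work here (reversibility follows from the symmetry of $R$, the minorization from $p(k,k')\geq c_0$ on a fixed set of $k'$ uniformly in $k$, and $p^n(k_0,k)\leq C\phi(k)$ for $n\geq 1$ from (\ref{form:prob}); the $n=0$ terms need the same small extra argument), so your bound is valid and in fact more robust, since it does not rely on the exact parity cancellation. What you lose is simplicity and a by-product the paper reuses: the exact vanishing (\ref{simm}) is invoked again in Lemma \ref{ESS} for the fourth-moment tightness estimate, where your $O(\theta\sqrt c)$ correlation bound would not be an immediate substitute.
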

%-----------------------------------------------
We will prove this lemma in the next section.

On the other hand, $S_N^>$ will be connected to two Poisson processes.

We split  the sum $\st^>(\theta)$ into two parts:
\begin{eqnarray}\label{split.1}\nonumber
\st^>(\theta)&=&
     \frac{1}{N^{2/3}}\sum_{n=0}^{[N\theta]-1}e_n\psi_n\;
       \1_{\{e_n\psi_n>c N^{2/3}\}}\\
    &+&\frac{1}{N^{2/3}}\sum_{n=0}^{[N\theta]-1}e_n\psi_n\;
       \1_{\{e_n\psi_n<-c N^{2/3}\}}.
   \end{eqnarray}
Defining the random variables
\begin{equation}\label{def:X+X-}
X_{i,N}^+=\frac{e_i\psi_i}{N^{2/3}}\;\1_{\{\psi_i\geq0\}},\hspace{1cm}
X_{i,N}^-=\frac{e_i\psi_i}{N^{2/3}}\;\1_{\{\psi_i\leq0\}},
\end{equation}
with values in $\R_+$, $\R_-$, respectively,
and the associated point processes
 $\mathcal{R}_N^+$ and $\mathcal{R}_N^-$,
\begin{equation}\label{def:RN}
\mathcal{R}_N^+=\sum_{i\in \N_0}\delta_{i/N,X_{i,N}^+},
\hspace{0.4cm}\mathcal{R}_N^-=\sum_{i\in \N_0}\delta_{i/N,X_{i,N}^-}.
\end{equation}
we can rewrite $\st^>(\theta)$ as
\begin{equation}\label{split.2}
\st^>(\theta)=\int_0^\theta  \int_c^\infty
x\; \cR^+_N(ds,dx)+
\int_0^\theta \int_{-\infty}^{-c}x\; \cR_N^-(ds,dx)
\end{equation}
The following  Proposition states that the two point processes
$\mathcal{R}_N^+$ and $\mathcal{R}_N^+$ converge to Poisson point processes.

\begin{proposition}\label{prop:PPP}
Let $\mathcal{R}_N^+$, $\mathcal{R}_N^-$ be the point processes
defined in (\ref{def:RN}). Then
the point process $\mathcal{R}_N^\pm$
converges in distribution to  Poisson point processes $\mathcal{R}^\pm$ on
$\R_+\times\R_+$, resp. $\R_+\times\R_-$, with intensity
measures $dt\times d\nu^\pm(x)\equiv\frac{3}{2}a
|x|^{- 5/2}dx$, with $a>0$ some explicit constant.
\end{proposition}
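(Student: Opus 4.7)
The plan is to invoke the Poisson convergence criterion of \cite{BK}, which for a triangular array built from an ergodic Markov chain reduces the proof of $\cR_N^+\to\cR^+$ to two ingredients: (i) convergence of the mean (first-moment) measure on rectangles $[0,\theta]\times(x,\infty)$, and (ii) an asymptotic factorisation of the two-point probability that holds on average. I will focus on $\cR_N^+$; the process $\cR_N^-$ is handled symmetrically, using that $v(-k)=-v(k)$ while $\phi$ is even, so the involution $k\mapsto -k$ preserves $\pi(dk)=\phi(k)\,dk$ and swaps the sign of $\psi$.

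For the mean measure, conditioning on $X_i=k$ with $v(k)\geq 0$ and using independence of $e_i$ from the chain,
\begin{equation*}
\P\bigl(X_{i,N}^{+}>x\bigm|X_i=k\bigr)=\exp\bigl(-xN^{2/3}\phi(k)/v(k)\bigr),\qquad x>c.
\end{equation*}
Assumption (\ref{ass:mu}), together with the strict positivity of $R$ away from the coordinate axes, gives a spectral gap for $P$, so the law of $X_i$ converges geometrically fast to $\pi$ and
\begin{equation*}
\sum_{i=0}^{\lfloor N\theta\rfloor-1}\P\bigl(X_{i,N}^{+}>x\bigr)\;\sim\;N\theta\int_{\{v\geq 0\}}\phi(k)\,e^{-xN^{2/3}\phi(k)/v(k)}\,dk.
\end{equation*}
Because $\phi(k)\sim 4\pi^{2}k^{2}$ and $v$ is bounded and bounded away from zero near $k=0$, the change of variable $k=y\,N^{-1/3}$ turns the right-hand side into $\theta\, a\, x^{-3/2}+o(1)$ for an explicit constant $a>0$; differentiating in $x$ recovers the intensity density $\tfrac{3}{2}\,a\,x^{-5/2}$ claimed for $\cR^{+}$.

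For the factorisation, split pairs $i<j\leq \lfloor N\theta\rfloor$ according to whether the gap $j-i$ exceeds a mixing scale $m_{N}$, chosen with $m_{N}\to\infty$ and $m_{N}/N\to 0$. Exponential mixing of $P$ lets one couple the chain from step $j$ on to a stationary copy independent of $X_{i}$ with total variation error $O(\rho^{m_N})$, yielding
\begin{equation*}
\P\bigl(X_{i,N}^{+}>x,\,X_{j,N}^{+}>y\bigr)=\P\bigl(X_{i,N}^{+}>x\bigr)\,\P\bigl(X_{j,N}^{+}>y\bigr)\bigl(1+o(1)\bigr)
\end{equation*}
uniformly in such pairs. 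Pairs with $j-i<m_N$ contribute at most $m_N\sum_{i}\P(X_{i,N}^{+}>x)^{2}=O(m_N/N)$, which is negligible. Feeding both ingredients into \cite{BK} produces the stated Poisson convergence of $\cR_N^{+}$ on $[0,\mathcal{T}]\times(c,\infty)$, and the mirror argument handles $\cR_N^{-}$.

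The delicate step is the factorisation: since each summand has probability $O(1/N)$, the mixing error $\rho^{m_N}$ must decay fast enough to be small relative to $N^{-1}$, while $m_N$ still grows slowly enough to keep the near-diagonal contribution negligible, and this must hold uniformly in the cutoff $x>c$. Matching these scales is the main technical work; once that is done, the mean-measure computation is a direct Laplace/change-of-variable calculation and assembling everything through the \cite{BK} criterion is routine.
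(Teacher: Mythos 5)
Your overall strategy is the paper's: verify the Poisson-convergence criterion of \cite{BK} for the arrays $X_{i,N}^\pm$, using ergodicity of the chain to pass to the invariant measure $\pi(dk)=\phi(k)dk$ and a Laplace-type computation (with $\phi(k)\sim k^2$, $|v|\to1$) to produce the intensity $\tfrac32 a|x|^{-5/2}dx$; your mean-measure calculation is essentially the paper's $\ell=1$ case. However, there is a genuine gap in how you use the criterion. Theorem 2.1 of \cite{BK}, as stated and used in the paper, is not a two-moment criterion: it requires, for \emph{every} $\ell\geq1$, that the sum over all $\ell$-tuples of distinct indices of the joint exceedance probabilities converges to $a^\ell\tau_1^{-3/2}\cdots\tau_\ell^{-3/2}$. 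Convergence of the first moment measure together with pairwise factorisation does not in general identify a Poisson limit, so verifying only $\ell=1,2$ leaves the hypothesis unverified. The paper's work is precisely the $\ell$-fold statement: Lemma \ref{lemma1} and the chained estimate (\ref{in:prob}), followed by the replacement of each $p^{m}$ by $\phi$ with an error measured \emph{relative} to the invariant density, via $\sup_{m\geq M+1}\sup_{k,k'}\phi(k')^{-1}|p^{m}(k,k')-\phi(k')|$.

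Two further steps inside your argument are asserted rather than proved, and they are exactly where the difficulty sits. First, for near-diagonal pairs ($0<j-i<m_N$) you bound the joint exceedance probability by the product of the marginals; mixing gives no information at short gaps, and this anti-clustering estimate must come from the kernel itself. The paper obtains it from the explicit form (\ref{form:prob}): for gaps $\geq2$ the $m$-step density is uniformly dominated by a multiple of $\phi$, giving a product-order bound $O(N^{-2})$, while for gap $1$ the joint probability is of smaller order, because conditionally on $X_i$ lying within $O(N^{-1/3})$ of $0$ the one-step density near $k'=0$ degenerates (the $\sin^2(\pi k')\cos^2(\pi k')$ term carries a coefficient $\sin^2(\pi X_i)=O(N^{-2/3})$), which is part (ii) of Lemma \ref{lemma1}. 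Second, your coupling bookkeeping is additive: a total-variation error $O(\rho^{m_N})$ per pair must be compared with target probabilities of size $N^{-2}$ (and, in the mean measure, per-term TV errors compete with terms of size $N^{-1}$), so a naive sum of the coupling errors is not $o(1)$; you need either $m_N\sim\log N$ combined with a uniform per-term $O(1/N)$ bound, or the ratio form of ergodicity that the paper uses. Finally, assumption (\ref{ass:mu}) does not yield a spectral gap --- it only controls the initial distribution; the mixing comes from the explicit structure of $p^m$ in (\ref{form:prob}).
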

We will prove this proposition in Section 6.
%
%
%%%%%%%%%%%%%%%%%%%%%%%%%%%%%%%%%%%%%%%%%%%%%%%%%%%%%%%%%%%%%%%%%%%%%%%
\subsection{Limit process for $S_N$}
We define a process $V$ on $\R_+$  with
values in $\R$ by
\begin{equation}\label{def:V}
V(\theta)=\int_0^\theta\int_0^\infty
\mathcal{R}^+(ds,dx)x
+\int_{0}^\theta\int_{-\infty}^{0} \mathcal{R}^-(ds,dx)x,
\end{equation}
and
\begin{equation}\label{def:Vc}
V^>_c(\theta)=\int_0^\theta\int_c^\infty
\mathcal{R}^+(ds,dx)x
+\int_{0}^\theta\int_{-\infty}^{-c} \mathcal{R}^-(ds,dx)x,
\end{equation}
where $\mathcal{R}_+$, $\mathcal{R}_-$ are the Poisson point processes
defined in Proposition \ref{prop:PPP}.

\begin{proposition} \label{levy-ito}
  Let be $V$, the process defined in (\ref{def:V}).
 $V$ is well defined and it is symmetric a L\'evy
  process stable of  index $3/2$, with Laplace functional
$\Psi(\l)=C_0 |\l|^{3/2}$, where $C_0=a\int dy|y|^{-5/2}(1-\cos y)$.

Moreover, $V_c\rightarrow V$ as $c\downarrow 0$, in the topology of uniform
convergence on compact intervals.
\end{proposition}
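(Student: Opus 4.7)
The plan is to recognize (\ref{def:V}) as a L\'evy--It\^o construction and build $V$ as an $L^2$ limit of the truncations $V^>_c$ as $c\downarrow 0$, then read off its characteristic function via the exponential formula for Poisson stochastic integrals. For each fixed $c>0$, $V^>_c$ is well defined because the number of atoms of $\mathcal{R}^+$ in $[0,\theta]\times(c,\infty)$ is Poisson with finite mean $\theta\cdot\tfrac{3a}{2}\int_c^\infty x^{-5/2}\,dx=a\theta c^{-3/2}$, and likewise for $\mathcal{R}^-$; so the double integrals in (\ref{def:Vc}) collapse almost surely to finite sums.

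The key estimate is the following Cauchy bound. For $0<c'<c$, the difference $V^>_{c'}(\theta)-V^>_c(\theta)$ is a compound-Poisson stochastic integral against $\mathcal{R}^++\mathcal{R}^-$ over the symmetric annulus $\{c'<|x|\le c\}$; by symmetry of the intensity it has zero mean, and the second-moment formula for Poisson integrals gives
\[
\E\!\left[\bigl(V^>_{c'}(\theta)-V^>_c(\theta)\bigr)^2\right]
=\theta\cdot\tfrac{3a}{2}\int_{c'<|x|\le c}x^2\,|x|^{-5/2}\,dx
=6a\theta\bigl(\sqrt c-\sqrt{c'}\bigr).
\]
Since $V^>_{c'}-V^>_c$ is a c\`adl\`ag martingale in $\theta$, Doob's $L^2$ inequality yields
\[
\E\!\left[\sup_{\theta\in[0,\mathcal{T}]}\bigl|V^>_{c'}(\theta)-V^>_c(\theta)\bigr|^2\right]
\le 24a\mathcal{T}\bigl(\sqrt c-\sqrt{c'}\bigr)\longrightarrow 0
\]
as $c,c'\downarrow 0$. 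Thus $\{V^>_c\}_{c>0}$ is Cauchy for the $L^2$ uniform norm on compact intervals; its limit is, by definition, $V$, which also establishes the convergence $V^>_c\to V$ claimed in the last sentence of the proposition (in $L^2$, hence in probability, uniformly on compact intervals).

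To identify $V$, the exponential formula for Poisson point processes gives
\[
\E\!\left[e^{i\lambda V^>_c(\theta)}\right]
=\exp\!\left(\theta\int_{|x|>c}(e^{i\lambda x}-1)\,\tfrac{3a}{2}\,|x|^{-5/2}\,dx\right)
=\exp\!\left(-\theta\cdot\tfrac{3a}{2}\int_{|x|>c}(1-\cos(\lambda x))\,|x|^{-5/2}\,dx\right),
\]
the second equality by symmetry of the L\'evy measure. Since $1-\cos(\lambda x)=O(\lambda^2 x^2)$ near $0$, the integral over $\R$ is finite, and substituting $y=\lambda x$ produces the factor $|\lambda|^{3/2}\int(1-\cos y)\,|y|^{-5/2}dy$. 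Letting $c\downarrow 0$ yields $\E[e^{i\lambda V(\theta)}]=\exp(-\theta C_0|\lambda|^{3/2})$ with $C_0$ as stated; this is the characteristic function of a symmetric $3/2$-stable law, whence $V$ is a symmetric $3/2$-stable L\'evy process. Stationary and independent increments pass from the $V^>_c$ to $V$ through convergence of finite-dimensional marginals together with the factorisation of joint characteristic functions. The main obstacle — why one cannot simply write $V(\theta)$ directly from (\ref{def:V}) as an absolutely convergent integral against $\mathcal{R}^\pm$ — is that $\int_0^1 x\cdot x^{-5/2}dx=\infty$, so the positive and negative jumps each fail to be absolutely summable; only the symmetry of $\nu^+$ and $\nu^-$ causes the would-be compensator to vanish, and this cancellation is exactly what the $L^2$/martingale argument above captures.
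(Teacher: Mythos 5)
Your proposal is correct, and it follows the same basic strategy as the paper (split at jump size $c$, finite Poisson intensity above $c$, second moment $6a\theta\sqrt c$ below $c$ via the symmetry of $\nu^\pm$, then read off the characteristic exponent $C_0|\lambda|^{3/2}$ from the exponential formula). The genuine difference is how the small jumps are handled. The paper keeps $V=V^>_c+V^<_c$ and computes $\E\bigl[|V^<_c(\theta)|^2\bigr]$ directly, which only gives convergence $V^<_c(\theta)\to 0$ in probability for each fixed $\theta$, even though the proposition asserts $V_c\to V$ uniformly on compacts; moreover, as you point out, the two one-sided integrals making up $V^<_c$ are each almost surely infinite (since $\int_0^c x\,d\nu^+(x)=\infty$), so $V^<_c$ itself really has to be understood as a compensated/$L^2$ object, a point the paper glosses over. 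Your route — taking $V$ as the $L^2$-Cauchy limit of the compound-Poisson martingales $V^>_c$, with Doob's maximal inequality upgrading the bound $\E\bigl[(V^>_{c'}(\theta)-V^>_c(\theta))^2\bigr]=6a\theta(\sqrt c-\sqrt{c'})$ to a supremum over $[0,\mathcal T]$ — both makes the definition of $V$ rigorous despite the non-absolute convergence and delivers exactly the locally uniform convergence claimed in the ``Moreover''; identifying the law by letting $c\downarrow 0$ in the characteristic function of $V^>_c$ is equivalent to the paper's direct computation. One bookkeeping remark: you consistently carry the intensity $\tfrac{3}{2}a|x|^{-5/2}$ from Proposition \ref{prop:PPP}, so your constant comes out as $\tfrac{3a}{2}\int dy\,|y|^{-5/2}(1-\cos y)$ rather than the $a\int dy\,|y|^{-5/2}(1-\cos y)$ stated in the proposition; this factor-$3/2$ discrepancy is already present in the paper (its proof switches to $d\nu(x)=a|x|^{-5/2}dx$) and is not a flaw in your argument.
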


\begin{proof}
  We split $V(\theta)$ into two parts
  $V(\theta)=V^>_c(\theta)+V^<_c(\theta)$.  Clearly, the two
  processes, if they exist, are independent.  Since, for any compact
  interval, $I$, the total intensity of $I\times (c,\infty)$ equals
\begin{equation}
 \int_I ds\int_c^\infty d\nu^+(x)= |I| a c^{-3/2}<\infty
\end{equation}
the process $V_c^>(\theta)$ is a finite sum, almost surely.

Moreover, by direct computation and using the fact that
$\E[V_c^<(\theta)]=0$, we get
\begin{equation}
\E\left[|V_c^<(\theta)|^2\right]=\int_0^\theta ds\;\int_0^c d\nu^+(x)x^2+
\int_0^\theta ds\;\int_{-c}^0 d\nu^-(x)x^2=6\theta a c^{1/2},
\end{equation}
hence also $V^<(\theta)$ is almost surely finite, and tends to zero in probability, as $c\downarrow 0$.

Then $V(\theta)$ is almost surely finite, and since it is the sum of
two independent processes which are right-continuous and have
independents increments, it satisfies the hypothesis of a L\'evy
process, and it is full characterized by the one-dimensional
distribution.  This is uniquely determined by its characteristic
exponent $\Psi:\R\to\mathbb{C}$ defined as
\begin{equation}
e^{-t\Psi(\lambda)}=\E\left[\exp\{i\lambda V(\theta)\}\right]
\end{equation}
which by direct computation is given by
\begin{equation}
\Psi(\lambda)=\int_{-\infty}^\infty d\nu(x)\left(1-e^{i\lambda x}\right),
\end{equation}
with the L\'evy measure $\nu$ defined on $\R\setminus \{0\}$ given by
$d\nu(x)=a |x|^{-5/2}dx$.
For every $\lambda\in\R$
\begin{equation}
\Psi(\lambda)=|\lambda|^{3/2} a\int_\R dy\; |y|^{-5/2}(1-\cos y)=C_0|\lambda|^{3/2},
\end{equation}
which is the characteristic exponent
of a symmetric L\'evy process stable with index $3/2$.
\end{proof}
%%%%%%%%%%%%%%%%%%%%%

Proposition \ref{levy-ito} together with Lemma \ref{proof:S-} and
Proposition \ref{prop:PPP} implies convergence of the finite dimensional
distributions of $\st$ to $V$. Since $V$ has stationary and independent
increments, it is in fact enough to prove the convergence of the
 one-dimensional distributions.

\begin{corollary}\label{finite}
For any $\th\in \R_+$,
\begin{equation}
S_N(\th)\rightarrow V(\th),
\end{equation}
in distribution.
\end{corollary}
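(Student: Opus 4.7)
The strategy is the classical triangular/Slutsky argument in $c$ and $N$. Write, for the parameter $c>0$ entering (\ref{def:S+S-}),
\[
S_N(\theta)=S_N^>(\theta)+S_N^<(\theta),\qquad V(\theta)=V_c^>(\theta)+V_c^<(\theta),
\]
and separately treat the large-jump and small-jump contributions. The three ingredients are already in hand: Lemma~\ref{proof:S-} to bound $S_N^<$, Proposition~\ref{prop:PPP} to pass to Poisson limits on the large jumps, and the $L^2$-estimate on $V_c^<$ proved inside Proposition~\ref{levy-ito}.

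\textbf{Step 1: large jumps, $c$ fixed.} I would show that for each fixed $c>0$,
\[
S_N^>(\theta) \;\xrightarrow[N\to\infty]{\mathrm{law}}\; V_c^>(\theta).
\]
By (\ref{split.2}) and (\ref{def:Vc}), both are integrals of the function $x$ against point processes over $[0,\theta]\times\{|x|>c\}$. Since Proposition~\ref{prop:PPP} gives vague convergence of $\cR_N^\pm$ to the Poisson limits, I would first truncate: for $M<\infty$ the functional
\[
\mu\mapsto \int_0^\theta\!\!\int_{c<|x|\le M} x\,\mu(ds,dx)
\]
is continuous at a.s.\ all realisations of the limiting Poisson measure (which puts no mass on the boundary $|x|=c$ or $|x|=M$), so the continuous mapping theorem gives convergence of the truncated integrals. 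To remove the truncation, I would control the tail in $L^1$ using (\ref{htd}): for the prelimit,
\[
\E\Bigl[\,\sum_{i=0}^{\lfloor N\theta\rfloor-1}\frac{e_i|\psi_i|}{N^{2/3}}\mathbf 1_{\{e_i|\psi_i|>MN^{2/3}\}}\Bigr]\le C\theta M^{-1/2},
\]
uniformly in $N$, while for the limit $\int_M^\infty x\,d\nu^\pm(x)=O(M^{-1/2})$. Chebyshev plus a standard two-parameter lemma then yields convergence in distribution as $N\to\infty$ then $M\to\infty$.

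\textbf{Step 2: small jumps.} Chebyshev applied to Lemma~\ref{proof:S-} gives, for any $\ve>0$,
\[
\limsup_{N\to\infty}\P\bigl(|S_N^<(\theta)|>\ve\bigr)\le \frac{C_0\theta\sqrt c}{\ve^2},
\]
and the computation inside Proposition~\ref{levy-ito} gives $\P(|V_c^<(\theta)|>\ve)\le 6\theta a c^{1/2}/\ve^2$. Both bounds vanish as $c\downarrow 0$, uniformly in $N$ and in the limit, respectively.

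\textbf{Step 3: conclusion.} Combining Steps 1 and 2 via the usual approximation lemma (e.g.\ Billingsley, Theorem~3.2): for every $\ve>0$,
\[
\limsup_{N\to\infty}\P\bigl(|S_N(\theta)-V(\theta)|>3\ve\bigr)
\;\le\; \limsup_{N\to\infty}\P\bigl(|S_N^<(\theta)|>\ve\bigr)+\P\bigl(|V_c^<(\theta)|>\ve\bigr),
\]
after using the distributional convergence $S_N^>(\theta)\Rightarrow V_c^>(\theta)$ from Step~1 (on a common probability space via Skorokhod representation, or equivalently through characteristic functions). Letting $c\downarrow 0$ closes the argument.

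\textbf{Main obstacle.} The delicate point is Step~1, i.e.\ upgrading the vague convergence of Proposition~\ref{prop:PPP} to convergence of the integrals of the \emph{unbounded} function $x$. The integrand is not compactly supported in $x$, so a pure continuous-mapping argument fails; one must truncate at level $M$ and then control the tail uniformly in $N$ using the heavy-tail asymptotics (\ref{htd}). Once this is done, everything else is bookkeeping.
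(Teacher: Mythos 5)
Your proposal is correct and follows essentially the same route as the paper: the same decomposition $S_N=S_N^>+S_N^<$, Proposition \ref{prop:PPP} for the large-jump part, Lemma \ref{proof:S-} together with the second-moment bound on $V_c^<$ for the small-jump parts, and the converging-together argument as $c\downarrow 0$. Your additional truncation at level $M$ in Step 1 simply spells out, correctly, the passage from point-process convergence to convergence of the integrals of the unbounded function $x$, a step the paper compresses into ``since the intensity of $[0,\th]\times(c,\infty)$ is finite, it follows readily.''
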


\begin{proof} From the representation (\ref{def:RN}) and Proposition
\ref{prop:PPP}, and since the intensity of $[0,\th]\times (c,\infty)$
is finite, it follows readily that
\begin{equation}
\label{levy-ito.1}
S_N^> (\th)\rightarrow V_c(\th), \quad\hbox{\rm {as} }N\uparrow \infty,
\end{equation}
in distribution. Moreover, $V_c(\th)\rightarrow V(\th)$, as $c\downarrow 0$,
 and
\begin{equation}\label{levy-ito.3}
\lim_{c\downarrow0}\limsup_{ N\uparrow \infty} |S_N^<(\th)|=0,
\end{equation}
in probability, by Lemma \ref{proof:S-}.
This implies the assertion of the corollary.
\end{proof}

To conclude the proof of Theorem 1, we  need to complement this corollary
with the proof of tightness of the sequence $S_N$. This will be postponed to
Section 6.

%%%%%%%%%%%%%%%%%%%%%%%%%%%%%%%%%%%%%%%%%%%

\section{Moment estimations.}
In this section we collect and prove some useful moment estimates.

We start with some preliminary results on  the transition probability density
 for the Markov
chain $\{X_n\}$. They are given by
\begin{eqnarray}\label{prob}
p(k,k')
&=&\phi(k)^{-1}R(k,k')\\\nonumber
&=&
8\frac{\cos^2(\pi k)}{1+2\cos^2(\pi k)}\sin^4(\pi k')+
8\frac{\sin^2(\pi k)}{1+2\cos^2(\pi k)}\sin^2(\pi k')\cos^2(\pi k').
\end{eqnarray}
We denote by $p^m(k,k')$ the $m$-step transition densities, i.e.
$$
p^2(k,k')=p(k,\cdot)\circ p(\cdot,k')\equiv \int_\T dk_1\;
p(k,k_1)p(k_1,k')
$$
and in  the same way,  for every $m\geq 1$
$$
p^{m+1}(k,k')=p^m(k,\cdot)\circ p(\cdot,k')=p(k,\cdot)\circ p^m(\cdot,k').
$$
Observe that $P^m(k,dk')=p^m(k,k')dk'$, where $P^m$ is the m-th
convolution integral of the probability kernel $P$.

In the next proposition we give an explicit formula for $p^m$.
\begin{proposition}
For every $m\geq1$, $p^m(k,k')$ has the following form
\begin{equation}\label{form:prob}\begin{split}
p^m(k,k')=&
8\frac{\cos^2(\pi k)}{1+2\cos^2(\pi k)}\left[a_m\sin^4(\pi k')+ b_m\sin^2(\pi
k')\cos^2(\pi k')\right]\\
+ & 8\frac{\sin^2(\pi k)}{1+2\cos^2(\pi k)}
\left[c_m\sin^4(\pi k')+ d_m\sin^2(\pi
k')\cos^2(\pi k')\right]
\end{split}\end{equation}
where
\begin{equation}\label{coeff}
\left\{
\begin{array}{ll}
a_1=d_1=1, \hspace{0.4cm}b_1=c_1=0, & m=1\\
0< a_m,b_m,c_m,d_m <1,         & m\geq 2
\end{array}\right.
\end{equation}
\end{proposition}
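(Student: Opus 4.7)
The plan is induction on $m$. The base case $m=1$ is immediate: comparing (\ref{prob}) with (\ref{form:prob}) gives $a_1=d_1=1$, $b_1=c_1=0$. For the inductive step I would use Chapman--Kolmogorov,
\[
p^{m+1}(k,k')=\int_\T p(k,k_1)\,p^m(k_1,k')\,dk_1,
\]
and exploit the fact that both $p(k,k_1)$ and $p^m(k_1,k')$ have rank-two structure in their first variable, of the form $A(\cdot)g_1+B(\cdot)g_2$, where
\[
A(k)=\frac{8\cos^2(\pi k)}{1+2\cos^2(\pi k)},\qquad B(k)=\frac{8\sin^2(\pi k)}{1+2\cos^2(\pi k)},
\]
and the $k'$-dependent parts always lie in the two-dimensional span of $f_1(k')=\sin^4(\pi k')$ and $f_2(k')=\sin^2(\pi k')\cos^2(\pi k')$.

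Substituting both expressions into the Chapman--Kolmogorov identity, the integrand decomposes into a sum of products, each factoring as a $k$-factor in $\{A(k),B(k)\}$, a $k_1$-factor, and a $k'$-factor in $\{f_1(k'),f_2(k')\}$. The $k_1$-integrations reduce to the four numerical constants
\[
I_{ij}=\int_\T f_i(k_1)\,G_j(k_1)\,dk_1,\qquad G_1=A,\ G_2=B,\quad i,j\in\{1,2\}.
\]
Collecting terms yields at once that $p^{m+1}$ again has the form (\ref{form:prob}), with the coefficients evolving by the linear recursion $(a_{m+1},c_{m+1})^T=M(a_m,c_m)^T$ and $(b_{m+1},d_{m+1})^T=M(b_m,d_m)^T$, where $M=(I_{ij})_{i,j=1,2}$. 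Since the integrands defining the $I_{ij}$ are continuous, nonnegative, and not identically zero on $\T$, all four entries of $M$ are strictly positive; applied to $(a_1,b_1,c_1,d_1)=(1,0,0,1)$ this produces $(a_2,b_2,c_2,d_2)=(I_{11},I_{12},I_{21},I_{22})$ with every component strictly positive, and strict positivity is then propagated for every $m\geq 2$.

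The upper bounds come from the normalisation $\int_\T p^m(k,k')\,dk'=1$: integrating (\ref{form:prob}) against $dk'$ and using the elementary values $\int_\T f_1\,dk'=3/8$ and $\int_\T f_2\,dk'=1/8$, together with the algebraic identity $\tfrac{3}{8}A(k)+\tfrac{1}{8}B(k)=1$ and the linear independence of $A,B$ as functions of $k$, yields the conservation laws
\[
3a_m+b_m=3,\qquad 3c_m+d_m=1,
\]
and combining these with positivity gives the stated bounds. I do not expect any serious obstacle here: the only mildly delicate part is bookkeeping which $I_{ij}$ occupies which slot of $M$ when expanding the Chapman--Kolmogorov product, and this is entirely mechanical once the rank-two structure in the first variable has been spotted.
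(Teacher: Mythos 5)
Your induction via Chapman--Kolmogorov is exactly the paper's argument in only slightly different clothing: the paper computes $p^2$ directly and then iterates, writing the coefficient matrix of $p^{m+1}$ as the $m=2$ matrix times that of $p^m$, which is precisely your recursion since $(a_2,b_2,c_2,d_2)=(I_{11},I_{12},I_{21},I_{22})$; positivity of the entries and the normalisation identities $3a_m+b_m=3$, $3c_m+d_m=1$ are obtained the same way (your justification of the latter, via $\int_\T f_1=3/8$, $\int_\T f_2=1/8$ and the independence of $A,B$, is in fact spelled out more carefully than in the paper).

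The one step that does not go through is your last sentence, and it is a defect shared with the paper's own proof: positivity together with $3a_m+b_m=3$ gives $a_m<1$ but only $b_m<3$, and $3c_m+d_m=1$ gives $c_m<1/3$ and $d_m<1$; the claimed bound $b_m<1$ does not follow from these relations. Indeed it is false: $b_2=I_{12}=8\int_\T \sin^6(\pi k)\,\bigl(1+2\cos^2(\pi k)\bigr)^{-1}dk=9\sqrt{3}-\tfrac{27}{2}\approx 2.09$, and one can check $b_m\to 3/2$ as $m\to\infty$ (since $p^m(0,\cdot)\to\phi=\tfrac43 f_1+4f_2$ while $A(0)=8/3$, $B(0)=0$). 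So the honest conclusion of this argument is that the coefficients are strictly positive and uniformly bounded (by $3$), which is all that the moment estimates in the later sections actually use; the bound ``$<1$'' in (\ref{coeff}) should be weakened accordingly, in your write-up as well as in the paper.
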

\begin{proof}
By direct computation,
\begin{equation}\begin{split}
p^2(k,k')=&
8\frac{\cos^2(\pi k)}{1+2\cos^2(\pi k)}\left[a_2\sin^4(\pi k')+ b_2\sin^2(\pi
k')\cos^2(\pi k')\right]\\
+ & 8\frac{\sin^2(\pi k)}{1+2\cos^2(\pi k)}
\left[c_2\sin^4(\pi k')+ d_2\sin^2(\pi
k')\cos^2(\pi k')\right]
\end{split}\end{equation}
with $a_2,b_2,c_2,d_2$  positive, $a_2=d_2$.
In the same way for $m\geq 2$ we find expression (\ref{form:prob}),
 where the  coefficients $a_m,b_m,c_m,d_m$ are  given by the following
recursive formula
\begin{equation}
\left(
\begin{array}{ll}
a_{m+1} & b_{m+1}\\
c_{m+1} & d_{m+1}
\end{array}
\right)
=
\left(
\begin{array}{ll}
a_2 & b_2\\
c_2 & d_2
\end{array}
\right)
\left(
\begin{array}{ll}
a_{m} & b_{m}\\
c_{m} & d_{m}
\end{array}
\right),\hspace{0.4cm}m\geq 2
\end{equation}
In particular  $a_m, b_m, c_m, d_m$ are   positive for every
$m\geq 2$ .
 Moreover, by condition $\int_\T dk'\; p^m(k,k')=1$, $\forall
m\geq 1$, we get
the relations
$$
3a_m+b_m=3,\hspace{1cm}3c_m+d_m=1
$$
which says that the coefficients are uniformly bounded,
i.e. $a_m,b_m,c_m,d_m < 1$ for every $m\geq 1$.
\end{proof}

%%%%%%%%%%%%%%%%%%%%%%%%%%%%%%%%%%%%%%%%%%%%%%%%%%
%%%%%%%%%%%%%

 We now  give the proof of Lemma \ref{proof:S-}.

\begin {proof} (Lemma \ref{proof:S-}). Let us write
\begin{equation}\label{S-1}\begin{split}
 &\E\left[\big|\st^<(\theta)\big|^2\right]=N^{-4/3}\sum_{n=0}^{\lfloor N\theta\rfloor-1}
\E\left[\big|e_n\psi_n\big|^2
\1_{\{e_n|\psi_n|\leq c N^{2/3}\}}\right]\\
&
+2N^{-4/3}\sum_{n=1}^{\lfloor N\theta\rfloor-1}\sum_{0\leq m<n}
\E\left[e_n\psi_n e_m\psi_m
\1_{\{e_n|\psi_n|\leq c N^{2/3}\}}
\1_{\{e_m|\psi_m|\leq c N^{2/3}\}}\right].
\end{split}\end{equation}
Let us focus on the first sum on the right hand side. For every $n\geq 1$
we have
\begin{equation}\label{Epsi2}\begin{split}
&\E\left[\big|e_n\psi_n\big|^2
\1_{\{e_n|\psi_n|\leq c N^{2/3}\}}\right]\\
&=\int_0^\infty dz\;e^{-z} z^2\int_\T d\mu(k_0)
\int_\T dk\;p^n(k_0,k)|\psi(k)|^2
\1_{\{z|\psi(k)|\leq c N^{2/3}\}},
\end{split}\end{equation}
where
\begin{equation}\label{mod_psi}
|\psi(k)|=|\phi(k)^{-1}v(k)|=
\frac{3|\cos(\pi k)|}{2\sin^2(\pi k)(1+2\cos^2(\pi k))}.
\end{equation}
Using the explicit form of $p^n$ given in (\ref{form:prob}), an
elementary computation reveals that
\begin{equation}
\E\left[\big|e_n\psi_n\big|^2
\1_{\{e_n|\psi_n|\leq c N^{2/3}\}}\right]\leq C_0 \sqrt c N^{1/3}
+C_1,
\end{equation}
with $C_0, C_1 <\infty$ are positive constants.
Thus
\begin{equation}\label{ESpsi2}
N^{-4/3}\sum_{n=0}^{\lfloor N\theta\rfloor -1}
\E\left[\big|e_n\psi_n\big|^2
\1_{\{e_n|\psi_n|\leq c N^{2/3}\}}\right]\leq C_0\theta\sqrt{c}
+ C_1 \theta N^{-1/3}.
\end{equation}

The second sum on the r.h.s. of Eq. (\ref{S-1}) is in fact equal to
zero. Namely, for all $ n>m\geq 1$ we have
\begin{eqnarray}\label{simm}
&&\E\left[e_n\psi_n e_m\psi_m
\1_{\{e_n|\psi_n|\leq c N^{2/3}\}}
\1_{\{e_m|\psi_m|\leq c N^{2/3}\}}\right]\\\nonumber
&&=
\int_0^\infty dz\; z e^{-z}\int_0^\infty du \; u e^{-u}
\int_\T d\mu(k_0)\int_\T dk\; p^m(k_0,k)\psi(k)
\1_{\{|\psi(k)|\leq c N^{2/3} z^{-1}\}}\\\nonumber
&&\quad\times
\int_\T dk\; p^{(n-m)}(k,k')\psi(k')
\1_{\{|\psi(k')|\leq c N^{2/3} u^{-1}\}}=0,
\end{eqnarray}
since $\psi(k)=-\psi(-k)$ and $p^\ell(k,k')=p^\ell(k,-k')$ $\forall k,
k'\in\T$, $\forall \ell\geq 1$.
This concludes the proof of Lemma \ref{proof:S-}.
\end{proof}
%-------------------------------
%------------------------------
The following related lemma will be needed in Section 8.

\begin{lemma}\label{mom1}
There exists $C<\infty$, such that for  $n\geq1$
%and for all $N$ large enough,
\begin{equation}
\E\left[e_n|\psi_n|\1_{\{e_i|\psi_n|>N^{2/3}\}}\right]\leq C N^{-1/3}.
\end{equation}
\end{lemma}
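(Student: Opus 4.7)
The plan is to condition on $e_n$ and on the law of $X_n$ to reduce the estimate to a concrete deterministic integral, and then to extract an $N^{-1/3}$ factor from the smallness of the singular region of $|\psi|$. Concretely, since $e_n$ is independent of $X_n$ and, for $n \geq 1$, $X_n$ has density $\int_\T d\mu(k_0)\, p^n(k_0,k)$, the expectation to be controlled rewrites as
\begin{equation*}
\E\left[e_n|\psi_n|\1_{\{e_n|\psi_n|>N^{2/3}\}}\right] = \int_0^\infty dz\, z\, e^{-z} \int_\T d\mu(k_0)\int_\T dk\, p^n(k_0,k)\,|\psi(k)|\,\1_{\{z|\psi(k)|>N^{2/3}\}}.
\end{equation*}

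The first key step will be to establish a uniform pointwise bound $p^n(k_0,k)\,|\psi(k)|\leq C$ for all $n\geq1$ and $k_0,k\in\T$. Factoring $\sin^2(\pi k)$ out of every summand in the explicit formula (\ref{form:prob}), and using the bounds $a_m,b_m,c_m,d_m\leq 1$ from the preceding proposition together with the trivial bounds $\cos^2(\pi k_0)/(1+2\cos^2(\pi k_0)),\, \sin^2(\pi k_0)/(1+2\cos^2(\pi k_0))\leq 1$, one obtains $p^n(k_0,k)\leq 8\sin^2(\pi k)$. Combining this with (\ref{mod_psi}), which gives $|\psi(k)|\leq 3/(2\sin^2(\pi k))$, yields the required uniform bound.

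The second key step will be to estimate the Lebesgue measure of $\{k\in\T: z|\psi(k)|>N^{2/3}\}$. Since the only singularity of $|\psi|$ is at $k=0$, where $|\psi(k)|\sim 3/(2\pi^2 k^2)$, this measure is at most $C\sqrt{z}\,N^{-1/3}$ (and trivially at most $1$). Inserting this into the display above bounds the inner double integral by $C'\sqrt{z}\,N^{-1/3}$, and performing the remaining $z$-integral gives $C''N^{-1/3}\int_0^\infty z^{3/2}e^{-z}\,dz = C'''N^{-1/3}\,\Gamma(5/2)$, which is the desired bound. No real obstacle is anticipated: both ingredients are direct consequences of the explicit formulas, and the initial distribution $\mu$ plays no role since the estimates are uniform in $k_0$; in particular the hypothesis (\ref{ass:mu}) would only be relevant at $n=0$, whereas the lemma assumes $n\geq 1$.
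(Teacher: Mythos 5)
Your proposal is correct and follows essentially the same route as the paper: the paper's proof consists precisely of the integral representation $\int_0^\infty dz\, z e^{-z}\int_\T d\mu(k_0)\int_\T dk\, p^n(k_0,k)|\psi(k)|\1_{\{z|\psi(k)|>N^{2/3}\}}$ followed by an appeal to the explicit formula (\ref{form:prob}), and your two steps (the uniform bound $p^n(k_0,k)|\psi(k)|\leq C$ from the $\sin^2(\pi k)$ factor cancelling the singularity of $\psi$, plus the measure estimate $C\sqrt{z}\,N^{-1/3}$ for the set where $z|\psi(k)|>N^{2/3}$, integrated against $z e^{-z}$) are exactly the details the paper leaves as "one easily finds." Your remark that assumption (\ref{ass:mu}) is irrelevant here because the bounds are uniform in $k_0$ for $n\geq 1$ is also accurate.
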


\begin{proof}
For every $n\geq 1$ we have
\begin{equation}\begin{split}
\E\left[e_n|\psi_n|\1_{\{e_n|\psi_n|>N^{2/3}\}}\right]
=
 \int_0^{\infty}dz\; z e^{-z}\int_\T d\mu(k_0)
\int_\T dk\;
 p^n(k_0,k)\big|\psi(k)\big|\1_{\{z|\psi(k)|>N^{2/3}\}},
\end{split}\end{equation}
and using the explicit formula (\ref{form:prob}) for $p^n$ one
easily finds that this expression is bounded by $C
N^{-1/3}$, with $C<\infty$.
\end{proof}

%---------------------------------
%---------------------------------
\begin{lemma}\label{ESS}
  Let $\sti^<(\theta)$, $\theta\in[0,\mathcal{T}]$, be the
  process defined in (\ref{def:sti}). Then for every
  $0\leq r\leq s\leq t\leq\mathcal{T}$
\begin{equation}
\E\left[|\sti^<(s)-\sti^<(r)|^2|\sti^<(t)-\sti^<(s)|^2\right]\leq A_0(t-r)^2,
\end{equation}
for some constant   $A_0<\infty$.
\end{lemma}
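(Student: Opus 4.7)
The estimate is a Kolmogorov--Chentsov type bound on iterated increments, which will be needed for tightness of $\st$ in the Skorokhod $J_1$-topology. The natural approach is to use the Markov property of $\{X_n\}$ to factor the expectation at the intermediate time $n_s := \lfloor Ns\rfloor$, reducing the problem to a product of two single-increment second moment estimates of the kind proved in Lemma \ref{proof:S-}. The algebraic ingredient that will make the resulting conditional estimate uniform in the starting state is the separated product form of the $m$-step transition density (\ref{form:prob}), where the dependence on the starting point $k$ enters only through the two uniformly bounded prefactors $\cos^2(\pi k)/(1+2\cos^2(\pi k))$ and $\sin^2(\pi k)/(1+2\cos^2(\pi k))$.

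More precisely, by the Markov property of $\{X_n\}$ applied at time $n_s$ together with the independence of the exponential clocks $\{e_n\}$,
\[
\E\!\left[|\sti^<(s)-\sti^<(r)|^2\,|\sti^<(t)-\sti^<(s)|^2\right]
= \E\!\left[|\sti^<(s)-\sti^<(r)|^2\, g_N(X_{n_s})\right],
\]
where $g_N(k):=\E[|\sti^<(t)-\sti^<(s)|^2\mid X_{n_s}=k]$. The proof then reduces to two estimates: a uniform conditional bound $\sup_{k\in\T} g_N(k)\le A_1(t-s)$, and a global bound $\E[|\sti^<(s)-\sti^<(r)|^2]\le A_2(s-r)$. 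The second is just Lemma \ref{proof:S-} applied to the sub-block indexed by $\lfloor Nr\rfloor,\dots,\lfloor Ns\rfloor-1$, the initial law $\mu$ satisfying (\ref{ass:mu}). For the first I repeat the computation of Lemma \ref{proof:S-} for the chain started deterministically from $k$: the off-diagonal terms vanish exactly as in (\ref{simm}) by the joint symmetries $\psi(k')=-\psi(-k')$ and $p^\ell(k,k')=p^\ell(k,-k')$, and after substituting (\ref{form:prob}) each diagonal term factors as a bounded-in-$k$ prefactor times a $k'$-integral identical to the one controlled in (\ref{Epsi2})--(\ref{ESpsi2}), so the same estimate goes through uniformly in the starting state.

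Combining the two bounds via the AM--GM inequality $(s-r)(t-s)\le(t-r)^2/4$,
\[
\E\!\left[|\sti^<(s)-\sti^<(r)|^2\,|\sti^<(t)-\sti^<(s)|^2\right]\le A_1A_2(s-r)(t-s)\le \tfrac{A_1A_2}{4}(t-r)^2,
\]
which gives the claim with $A_0:=A_1A_2/4$.

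The delicate point is the uniform-in-$k$ bound on $g_N$: the summands involve $|\psi(k')|^2\sim |k'|^{-4}$, strongly singular at $k'=0$, so a naive pointwise estimate for the chain started from $k$ close to $0$ would blow up. The saving feature is precisely the product structure of (\ref{form:prob}), where the starting-point dependence factors out as a prefactor bounded by $1$ and the remaining $k'$-integral is tamed by the truncation at $cN^{2/3}$ exactly as in the proof of Lemma \ref{proof:S-}. For a generic Markov chain with comparable heavy-tailed transitions this uniform conditional bound would not be available in such a clean form, and the $(t-r)^2$ scaling of the iterated moment would be considerably harder to secure.
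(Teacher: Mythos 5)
There is a genuine gap at the step you yourself flag as delicate: the uniform conditional bound $\sup_{k\in\T} g_N(k)\le A_1(t-s)$ is false. The future block $\sti^<(t)-\sti^<(s)=N^{-2/3}\sum_{n=\lfloor Ns\rfloor}^{\lfloor Nt\rfloor-1}e_n\psi_n\1_{\{e_n|\psi_n|\le N^{2/3}\}}$ contains the summand at the junction index $n=\lfloor Ns\rfloor$, which is a function of the conditioning state itself: conditionally on $X_{\lfloor Ns\rfloor}=k$ it contributes to $g_N(k)$ the term $N^{-4/3}\,|\psi(k)|^2\int_0^{N^{2/3}/|\psi(k)|}z^2e^{-z}\,dz$. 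For this term there is no transition density to tame the singularity $|\psi(k)|^2\sim k^{-4}$ — the product structure of (\ref{form:prob}) only helps after at least one step — and taking $k$ with $|\psi(k)|\asymp N^{2/3}$ (i.e.\ $k\asymp N^{-1/3}$) makes it of order $1$, uniformly in $t-s$. Hence $\sup_k g_N(k)$ is bounded below by a positive constant whenever the future block is nonempty, and your chain of inequalities only yields $\E[|\sti^<(s)-\sti^<(r)|^2|\sti^<(t)-\sti^<(s)|^2]\le C(s-r)$, not the $(t-r)^2$ decay, which is precisely what the lemma must deliver for the tightness estimates (there it is applied with $t-r\le 2\delta$). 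The remaining ingredients of your argument (Markov factorisation at $\lfloor Ns\rfloor$, vanishing of cross terms by the symmetries $\psi(-k')=-\psi(k')$, $p^\ell(k,k')=p^\ell(k,-k')$, the uniform bound $CN^{1/3}$ for the terms with at least one transition step, and Lemma \ref{proof:S-} for the past block) are sound.

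The paper avoids this trap by never pinning the junction state: it expands the full product into the quadruple sum, kills all cross terms by (\ref{simm}), and then bounds each surviving mixed diagonal term $\E\bigl[|e_i\psi_i|^2\1_{\{e_i|\psi_i|\le N^{2/3}\}}|e_h\psi_h|^2\1_{\{e_h|\psi_h|\le N^{2/3}\}}\bigr]$, $i<h$, by $CN^{2/3}$ through two nested applications of (\ref{form:prob}): the inner $k'$-integral gives $N^{1/3}/\sqrt u$ times a bounded prefactor in the earlier state $k$, and — crucially — that earlier state is then integrated against the density $p^i(k_0,\cdot)$, which vanishes like $\sin^2(\pi k)$ and so regularises $|\psi(k)|^2$, yielding another factor $N^{1/3}/\sqrt z$. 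Counting at most of order $N^2(s-r)(t-s)\le N^2(t-r)^2$ such pairs and multiplying by $N^{-8/3}$ gives $A_0(t-r)^2$. You could repair your scheme by splitting off the junction summand and estimating $\E\bigl[|\sti^<(s)-\sti^<(r)|^2\,|e_{\lfloor Ns\rfloor}\psi_{\lfloor Ns\rfloor}|^2\1\bigr]$ jointly, but that estimate is exactly the paper's two-index computation, so the clean factorisation into two single-block bounds cannot be salvaged as stated.
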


%-----------------------------------
\begin{proof}
For every $0\leq r< s< t\leq\mathcal{T}$ we have:
\begin{eqnarray}
&&|\sti^<(s)-\sti^<(r)|^2|\sti^<(t)-\sti^<(s)|^2\\\nonumber
&&=\frac{1}{N^{8/3}}
\sum_{i=\lfloor Nr\rfloor}^{\lfloor Ns\rfloor -1}
\sum_{j=\lfloor Nr\rfloor}^{\lfloor Ns\rfloor -1}
  e_i\psi_i\1_{\{e_i|\psi_i|\leq  N^{2/3}\}}
  e_j\psi_j\1_{\{e_j|\psi_j|\leq  N^{2/3}\}}\\
\nonumber&&\times
\sum_{h=\lfloor Ns\rfloor}^{\lfloor Nt\rfloor -1}
\sum_{l=\lfloor Ns\rfloor}^{\lfloor Nt\rfloor -1}
  e_h\psi_h\1_{\{e_h|\psi_h|\leq  N^{2/3}\}}
  e_l\psi_l\1_{\{e_l|\psi_l|\leq  N^{2/3}\}}.
\end{eqnarray}
By (\ref{simm}), the only terms with non-vanishing expectation are
\begin{equation}
\frac{1}{N^{8/3}}
\sum_{i=\lfloor Nr\rfloor}^{\lfloor Ns\rfloor -1}
  \big|e_i\psi_i\big|^2\1_{\{e_i|\psi_i|\leq  N^{2/3}\}}
\sum_{h=\lfloor Ns\rfloor}^{\lfloor Nt\rfloor -1}
  \big|e_h\psi_h\big|^2\1_{\{e_h|\psi_h|\leq  N^{2/3}\}}.
\end{equation}
For every $h>i$ we have
\begin{eqnarray}
&&\E\left[
    \big|e_i\psi_i\big|^2\1_{\{e_i|\psi_i|\leq  N^{2/3}\}}
    \big|e_h\psi_h\big|^2\1_{\{e_h|\psi_h|\leq  N^{2/3}\}}
  \right]
\\\nonumber
&&=
  \int_0^\infty dz\; z^2 e^{-z}
  \int_0^\infty du\; u^2 e^{-u}
  \int_\T d\mu(k_0)\\
  \nonumber && \quad \times\int_\T dk\; p^i(k_0,k)|\psi(k)|^2
           \1_{\{z|\psi(k)|\leq  N^{2/3}\}}
          \int_\T dk'\; p^{(h-i)}(k,k')|\psi(k')|^2
           \1_{\{u|\psi(k')|\leq  N^{2/3}\}}.
\end{eqnarray}
Using (\ref{form:prob}), we find that for  all $k\in\T$,
\begin{eqnarray}
&&\int_\T dk'\; p^{(h-i)}(k,k')|\psi(k')|^2
           \1_{\{u|\psi(k')|\leq  N^{2/3}\}}\\\nonumber
&&   \leq\frac{1}{\sqrt u}N^{1/3}
        B\left(\frac{\cos^2(\pi k)}{1+2\cos^2(\pi k)}
             +\frac{\sin^2(\pi k)}{1+2\cos^2(\pi k)}
        \right),
\end{eqnarray}
where $B$ is a finite constant.
Thus, $\forall k_0\in\T$
\begin{equation}\begin{split}
\int_\T dk\; p^i(k_0,k)|\psi(k)|^2
           \1_{\{z|\psi(k)|\leq  N^{2/3}\}}
          \int_\T dk'\; p^{(h-i)}(k,k')|\psi(k')|^2
           \1_{\{u|\psi(k')|\leq  N^{2/3}\}}\\
\leq \frac{1}{\sqrt z}\frac{1}{\sqrt u}N^{2/3}
        B_1\left(\frac{\cos^2(\pi k_0)}{1+2\cos^2(\pi k_0)}
             \frac{\sin^2(\pi k_0)}{1+2\cos^2(\pi k_0)}
        \right),
\end{split}\end{equation}
with $B_1<\infty $. Finally we get
\begin{eqnarray}\nonumber
\E\left[|\sti^<(s)-\sti^<(r)|^2|\sti^<(t)-\sti^<(s)|^2\right]
&\leq& \frac{1}{N^{8/3}}
          \sum_{i=\lfloor Nr\rfloor}^{\lfloor Ns\rfloor -1}
          \sum_{h=\lfloor Ns\rfloor}^{\lfloor Nt\rfloor -1}
           A_0 \; N^{2/3}\\
& \leq&  A_0(t-r)^2,
\end{eqnarray}
which is the assertion of the lemma.
\end{proof}

%%%%%%%%%%%%%%%%%%%%%%%%%%%%%%%%%%%%%%%%%%%%%%%%%%%%%%
%%%%%%%%%%%%%%%%%%%%%%%%%%%%%%%%%%%%
\section{Proof of Proposition \ref{prop:PPP}}\label{dim:PPP}
This section is devoted to verify conditions of the
Theorem 2.1 in \cite{BK}, which guarantees the convergence of the
point processes defined in (\ref{def:RN}) to Poisson point processes.
%A general extremal value theorem ((2.1) in \cite{BK}) gives some
%condition for the convergence of a point process to a Poisson point
%process.
We recall the statement of Theorem 2.1 of \cite{BK}.
\begin{theorem}\label{theorem1}[\cite{BK}]
Denote by $\sum_{\alpha_N(\ell)}$ the sum over all the ordered sequences of
different indices $(i_1,..,i_\ell)$ with values in $\{1,..,N\}$.
Let be $\{Z_{i,N}^+\}$ (resp. $\{Z_{i,N}^-\}$), $N\geq 1$,
$1\leq i\leq N$, an array of random variables with values in $\R_+$
(resp. in $\R_-$).
Assume that for every $\ell>0$ and all sets of positive constants
$(\tau_1,..,\tau_\ell)$
\begin{equation}
\sum_{\alpha_N(\ell)}\P\left[|Z_{i_1,N}^\pm|>\tau_1,...,
|Z_{i_\ell,N}^\pm|>\tau_\ell)\right]\to
a^\ell\frac{1}{\tau_1^{3/2}}...\frac{1}{\tau_\ell^{3/2}},
\hspace{0.4cm}\hbox{\rm as}\,\, N\to\infty,
\end{equation}
for some $a>0$. Then the point process
$\sum_{i=0}^{N-1}\delta_{i/N,Z_{i,N}^\pm}$
converges in distribution to a Poisson point process $\mathcal{R}^\pm$ on
$[0,\mathcal{T}]\times\R_\pm$ with intensity
measure $dt\times d\nu^\pm(x)$, where $d\nu^\pm(x)=\frac{3}{2}a
|x|^{-5/2}dx$.
\end{theorem}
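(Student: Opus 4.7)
The plan is to deduce Poisson convergence from the factorial-moment hypothesis via the following classical criterion (Kallenberg, \emph{Random Measures}): if $N_n$ is a sequence of simple point processes on a locally compact Polish space $E$ and $\mu$ is a diffuse Radon measure on $E$, then $N_n$ converges in distribution to the Poisson point process with intensity $\mu$ provided that, for every finite disjoint collection $B_1,\ldots,B_k$ of bounded sets from a dissecting semiring generating the bounded Borel sets (away from the accumulation point) and every exponent vector $(\ell_1,\ldots,\ell_k)\in\N^k$,
\begin{equation*}
\E\left[\prod_{j=1}^k N_n(B_j)^{(\ell_j)}\right] \;\longrightarrow\; \prod_{j=1}^k \mu(B_j)^{\ell_j},
\end{equation*}
the right-hand side being exactly the joint factorial moment of independent Poissons with parameters $\mu(B_j)$. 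In the present setup I take $E = [0,\mathcal{T}]\times(\R_\pm\setminus\{0\})$, $\mu = dt\times d\nu^\pm$ with $d\nu^\pm(x)=\tfrac{3}{2}a|x|^{-5/2}dx$, and let the semiring consist of rectangles $(s,t]\times\{x:|x|>\tau\}$ with $\tau>0$.

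To check the criterion, I would first observe that for $B = (s,t]\times\{|x|>\tau\}$ the count $N_n^\pm(B) = \sum_i \1_{\{(i/N,\,Z_{i,N}^\pm)\in B\}}$ has $\ell$-th ordinary factorial moment
\begin{equation*}
\E\bigl[N_n^\pm(B)^{(\ell)}\bigr] \;=\; \sum_{\alpha_N^{(s,t]}(\ell)} \P\bigl[|Z_{i_1,N}^\pm|>\tau,\ldots,|Z_{i_\ell,N}^\pm|>\tau\bigr],
\end{equation*}
where the sum runs over ordered $\ell$-tuples of distinct indices in $(sN,tN]$. The hypothesis, applied in its natural localization to this sub-range with all thresholds equal to $\tau$, yields limit $(t-s)^\ell(a\tau^{-3/2})^\ell$, which matches $\mu(B)^\ell = \bigl((t-s)a\tau^{-3/2}\bigr)^\ell$. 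For a disjoint collection $B_1,\ldots,B_k$ of such rectangles with distinct time-ranges $(s_j,t_j]$ and thresholds $\tau_j$, the joint factorial moment is a sum of $\P\bigl[\bigcap_{j,m}\{|Z_{i_{j,m},N}^\pm|>\tau_j\}\bigr]$ over ordered tuples partitioned across the time sub-ranges, so the same hypothesis (now genuinely using heterogeneous thresholds $\tau_1,\ldots,\tau_k$) factorizes into $\prod_j (t_j-s_j)^{\ell_j}(a\tau_j^{-3/2})^{\ell_j} = \prod_j \mu(B_j)^{\ell_j}$, as required.

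The main obstacle is the extension of the stated hypothesis — which sums over the \emph{full} index range $\{1,\ldots,N\}$ with a common threshold per coordinate — to sub-ranges $\{i:sN<i\le tN\}$ and to configurations with heterogeneous thresholds. The derivation of the hypothesis in \cite{BK} is carried out with enough uniformity that the same estimates apply mutatis mutandis, scaling the asymptotic by the Lebesgue fraction of each sub-range and permitting the thresholds to vary index-by-index; once this localized, mixed-threshold version is in hand, the factorial-moment criterion closes the argument immediately. A fully self-contained alternative would instead verify only $\E[N_n^\pm(B)]\to\mu(B)$ and $\P[N_n^\pm(B)=0]\to e^{-\mu(B)}$ via inclusion-exclusion on $\bigcap_i\{(i/N,Z_{i,N}^\pm)\notin B\}$, invoking Bonferroni's alternating bounds to sandwich the void probability between truncations of $\sum_\ell(-1)^\ell\mu(B)^\ell/\ell!$ and thereby legitimize the termwise passage to the limit in that series.
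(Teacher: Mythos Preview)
The paper does not prove this theorem: it is quoted from \cite{BK} (as their Theorem~2.1), and the remainder of Section~6 is devoted to \emph{verifying its hypothesis} for the particular arrays $X_{i,N}^\pm$ of (\ref{def:X+X-}). So there is no in-paper proof to compare against.

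Your factorial-moment approach via Kallenberg's criterion is the natural one, and you correctly isolate the real obstacle: the hypothesis constrains only sums over the \emph{full} index range $\{1,\ldots,N\}$, whereas convergence of the two-dimensional point process $\sum_i\delta_{i/N,\,Z_{i,N}}$ to a Poisson process with product intensity $dt\times d\nu$ requires control of sub-range sums $\sum_{sN<i\le tN}$. This gap is not a technicality --- the statement, read literally as an abstract theorem, is false. Take $Z_{i,N}^+=0$ for $i\le N/2$ and, for $i>N/2$, let the $Z_{i,N}^+$ be i.i.d.\ with $\P[Z_{i,N}^+>\tau]=2aN^{-1}\tau^{-3/2}\wedge1$. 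Then for every $\ell$ and every $(\tau_1,\ldots,\tau_\ell)$,
\begin{equation*}
\sum_{\alpha_N(\ell)}\P\bigl[Z_{i_1,N}^+>\tau_1,\ldots,Z_{i_\ell,N}^+>\tau_\ell\bigr]
=\frac{(N/2)!}{(N/2-\ell)!}\,\prod_{j=1}^\ell\frac{2a}{N\tau_j^{3/2}}
\;\longrightarrow\; a^\ell\prod_{j=1}^\ell\tau_j^{-3/2},
\end{equation*}
so the hypothesis holds, yet the limiting point process on $[0,1]\times(0,\infty)$ has intensity $2\,\1_{[1/2,1]}(t)\,dt\times d\nu$, not $dt\times d\nu$.

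Neither of your proposed remedies closes this. Appealing to ``enough uniformity in the derivation in \cite{BK}'' is not a proof of the theorem as stated --- it is an appeal to a different, stronger theorem. And the Bonferroni/void-probability alternative still needs the localized sums in order to sandwich $\P\bigl[N_n\bigl((s,t]\times(\tau,\infty)\bigr)=0\bigr]$, so it runs into the identical deficiency. What is actually required --- and what the uniform estimates of Lemma~6.2 in the paper would in fact deliver for the concrete $X_{i,N}^\pm$ --- is the localized version of the hypothesis in which the indices are constrained to prescribed sub-intervals of $\{1,\ldots,N\}$ and the limit acquires the corresponding Lebesgue factors. With that strengthened hypothesis your factorial-moment argument goes through cleanly; from the hypothesis as written it cannot.
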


Our goal is  to verify these conditions for the random variables
$\{X_{i,N}^+\}$, $\{X_{i,N}^-\}$ defined in (\ref{def:X+X-}).
In order to do so, we need  the following lemma.
\begin{lemma}\label{lemma1}
  For every $\ell>0$, for every sequence of different indices,
  $i_1,..,i_\ell\in\{1,..,N\}$, and all sets of positive constants,
  $(\tau_1,..,\tau_\ell)$, the following statements hold:
\begin{itemize}
\item[(i)]
if  $(i_{j+1}-i_j)\geq 2$ for every
$j=1,..,\ell-1$, then
\begin{equation}
\lim_{N\to\infty}\; N^\ell\tau_1^{3/2}...\tau_\ell^{3/2}\;
\P\left[e_1\big|\psi_{i_1}\big|>N^{2/3}\tau_1,...,
e_\ell\big|\psi_{i_\ell}\big|>N^{2/3}\tau_\ell\right]=\tilde{C_\ell},
\end{equation}
with $\tilde{C_\ell}>0$.
\item[(ii)]
If $(i_{j+1}-i_j)=1$ for some $j=1,..,\ell-1$, then
\begin{equation}
\lim_{N\to\infty}\; N^\ell\tau_1^{3/2}...\tau_\ell^{3/2}\;
\P\left[e_1\big|\psi_{i_1}\big|>N^{2/3}\tau_1,...,
e_\ell\big|\psi_{i_\ell}\big|>N^{2/3}\tau_\ell\right]=0.
\end{equation}
\end{itemize}
\end{lemma}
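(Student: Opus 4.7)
The plan is to express the probability as an explicit multiple integral using the Markov chain structure and then to read off its dominant scale by localizing near $k=0$ and rescaling. Since the $e_i$ are i.i.d.\ $\mathrm{Exp}(1)$ and independent of $\{X_n\}$, I would integrate them out to obtain
\begin{align*}
&\P\bigl[e_1|\psi_{i_1}|>N^{2/3}\tau_1,\ldots,e_\ell|\psi_{i_\ell}|>N^{2/3}\tau_\ell\bigr]\\
&\quad=\int_\T\mu(dk_0)\int_{\T^\ell}\prod_{j=1}^\ell dk_j\, p^{i_1}(k_0,k_1)\prod_{j=1}^{\ell-1}p^{i_{j+1}-i_j}(k_j,k_{j+1})\prod_{j=1}^\ell e^{-N^{2/3}\tau_j/|\psi(k_j)|},
\end{align*}
after relabelling $k_{i_j}\mapsto k_j$. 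From \eqref{mod_psi}, $|\psi(k)|\sim(2\pi^2 k^2)^{-1}$ as $k\to 0$, so each factor $\exp(-N^{2/3}\tau_j/|\psi(k_j)|)$ is super-polynomially small unless $|k_j|=O(N^{-1/3})$. I would therefore restrict each $k_j$ to that regime (at a negligible cost) and rescale $k_j=N^{-1/3}y_j$, producing a Jacobian $N^{-1/3}$ per coordinate and rescaled weights $e^{-2\pi^2\tau_j y_j^2}$.

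The crucial input is the behaviour of the transition density near $k'=0$ coming from \eqref{form:prob}: since $b_m>0$ for $m\geq 2$ while $b_1=0$,
\begin{equation*}
p^m(k,k')=\tfrac{8\pi^2}{3}b_m(k')^2+O\bigl((k')^4+k^2(k')^2\bigr)\qquad(m\geq 2),
\end{equation*}
uniformly in $k\in\T$, while for $m=1$ there is no $(k')^2$ term and $p(k,k')=O((k')^4+k^2(k')^2)$. Consequently, after rescaling, a middle transition with gap $m\geq 2$ contributes a factor of order $N^{-2/3}\cdot y_{j+1}^2$, whereas a $p^1$-transition with both arguments at scale $N^{-1/3}$ contributes only $O(N^{-4/3})$ — an extra factor $N^{-2/3}$.

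In case (i), every middle transition has gap $m\geq 2$, and the initial factor $\int\mu(dk_0)p^{i_1}(k_0,k_1)$ is likewise of order $N^{-2/3}$: by the same expansion if $i_1\geq 2$, or, if $i_1=1$, by retaining the $k_0^2(k_1)^2$ term together with $\int\mu(dk_0)k_0^2<\infty$ (which follows from Assumption \eqref{ass:mu}). Collecting the $\ell$ Jacobians and $\ell$ transition factors gives total weight $N^{-\ell}$, and each rescaled one-dimensional integral $\int_\R y_j^2\,e^{-2\pi^2\tau_j y_j^2}\,dy_j$ is a positive constant multiple of $\tau_j^{-3/2}$, yielding $N^\ell\tau_1^{3/2}\cdots\tau_\ell^{3/2}\P[\cdots]\to\tilde C_\ell>0$. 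In case (ii), the extra $N^{-2/3}$ from any $p^1$-transition in the middle drives $N^\ell\P[\cdots]$ to $0$.

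The main obstacle is to make the expansion of $p^m$ sufficiently uniform in $m$, $k$ and in the index sequence (which may depend on $N$), so that the remainders in \eqref{form:prob} do not spoil the leading order even when some gaps $i_{j+1}-i_j$ are very large, and to control the $k_0$-integration against $\mu$ — whose contribution formally depends on $i_1$ — without losing the positivity of $\tilde C_\ell$. Once these uniform estimates are in place, both parts of the lemma follow by dominated convergence from the heuristic computation above.
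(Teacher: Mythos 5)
Your proposal is correct and takes essentially the same route as the paper: both arguments rest on the explicit form (\ref{form:prob}) of $p^m$, the localization of each $k_j$ to the scale $N^{-1/3}$ forced by the tail condition on $|\psi|$, and the dichotomy that the $\sin^2(\pi k')\cos^2(\pi k')\sim (k')^2$ term is present for $m\geq 2$ but has coefficient $b_1=0$ for $m=1$ (leaving only $(k')^4$ and $k^2(k')^2$ contributions), which gives the $N^{-1}\tau^{-3/2}$ per-step scaling in case (i) and the extra factor $N^{-2/3}$ in case (ii). The only differences are cosmetic: you integrate out the exponentials and rescale $k_j=N^{-1/3}y_j$ to exhibit the limit as a product of Gaussian-type integrals, while the paper keeps the exponential variables and iterates the bounds (\ref{in:prob1})--(\ref{in:prob}); and for $i_1=1$ the relevant point for positivity is $\int_\T d\mu(k_0)\,\sin^2(\pi k_0)(1+2\cos^2(\pi k_0))^{-1}>0$, which holds since (\ref{ass:mu}) rules out $\mu=\delta_0$, rather than the finiteness of $\int d\mu(k_0)k_0^2$.
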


\begin{proof}
Let us consider
\begin{equation}\label{prob1}
\int_0^\infty dz\; e^{-z}
\int_\T dk_1\;
p^{m}(k,k')\1_{\{|\psi(k')|>N^{2/3}\tau z^{-1}\}},
\end{equation}
with $m\geq 1$.  Using (\ref{form:prob}) and the explicit expression
 for $\psi$ (\ref{mod_psi}), one easily finds that for $m\geq 2$ this
quantity is bounded by
\begin{equation}\label{in:prob1}
\begin{split}
%\lim_{N\to\infty}N
%\int_0^\infty dz\; e^{-z}
%\int_\T dk'\;
%p^{m}(k,k')\1_{\{|\psi(k')|>N^{2/3}\tau z^{-1}\}}\leq\\
C_1\left[ \frac{\cos^2(\pi k)}{1+2\cos^2(\pi k)}+
\frac{\sin^2(\pi k)}{1+2\cos^2(\pi k)}
\right]
\frac{1}{N}\frac{1}{\tau^{3/2}}
\end{split}
\end{equation}
with $C_1<\infty$. On the other hand, for $m=1$, we get
\begin{equation}\label{in:prob1b}\begin{split}
&\int_0^\infty dz\; e^{-z}
\int_\T dk'\;
p(k,k')\1_{\{|\psi(k')|>N^{2/3}\tau z^{-1}\}}\\
&\leq C_0\left[ \frac{\cos^2(\pi k)}{1+2\cos^2(\pi k)}+
\frac{\sin^2(\pi k)}{1+2\cos^2(\pi k)}
\right]\frac{1}{N^{5/3}}\frac{1}{\tau^{5/2}},
\end{split}\end{equation}
with $C_0<\infty$.

Next we consider
\begin{equation}\label{prob2}\begin{split}
&\int_0^\infty du\;e^{-u}\int_\T dk_1\;
p^{n}(k,k_1)\1_{\{|\psi(k_1)|>N^{2/3}\tau_1 u^{-1}\}}
\int_0^\infty dz\; e^{-z}\\
&\times
\int_\T dk'\;
p^{m}(k_1,k')\1_{\{|\psi(k')|>N^{2/3}\tau_2 z^{-1}\}}.
\end{split}\end{equation}
%%%%%%%
%
By (\ref{in:prob1}), (\ref{in:prob1b})  we find that
for $n,m\geq 2$
\begin{equation}\label{in:prob2}\begin{split}
&\int_0^\infty du\;e^{-u}\int_\T dk_1\;
p^{n}(k,k_1)\1_{\{|\psi(k_1)|>N^{2/3}\tau_1 u^{-1}\}}
\int_0^\infty dz\; e^{-z}\\
& \quad \times \int_\T dk'\;
p^{m}(k_1,k')\1_{\{|\psi(k')|>N^{2/3}\tau_2 z^{-1}\}}\\
&\leq C_2
\left[
\frac{\cos^2(\pi k)}{1+2\cos^2(\pi k)}
+
\frac{\sin^2(\pi k)}{1+2\cos^2(\pi k)}
\right]
\frac{1}{N^2}\frac{1}{\tau_1^{3/2}}\frac{1}{\tau_2^{3/2}},
\end{split}\end{equation}
with $C_2>0$, while if $m=1$ and/or $n=1$, then (\ref{prob2}) is of
order $N^{-8/3}$.
By repeating this procedure, one finds that for
every $n_1,..,n_\ell\geq 2$ the following inequality holds:
\begin{equation}\label{in:prob}\begin{split}
%\lim_{N\to\infty}N^\ell
& \int_0^\infty dz_1\; e^{-z_1}
\int_\T dk_1\;
p^{n_1}(k,k_1)\1_{\{|\psi(k_1)|>N^{2/3}\tau_1 z_1^{-1}\}}
\int_0^\infty dz_2\; e^{-z_2}\\
& \quad \times
\int_\T
dk_2\;p^{n_2}(k_1,k_2)\1_{\{|\psi(k_2)|>N^{2/3}\tau_2
z_2^{-1}\}}
\int_0^\infty dz_3 e^{-z_3}\int_\T....
\int_0^\infty dz_\ell e^{-z_\ell}\\
& \quad \times
\int_\T
dk_\ell\;p^{n_\ell}(k_{\ell-1},k_\ell)\1_{\{|\psi(k_\ell)|>N^{2/3}\tau_\ell
z_\ell^{-1}\}}\\
&\leq
\frac{1}{N^\ell}C_\ell\frac{1}{\tau_1^{3/2}}...\frac{1}{\tau_\ell^{3/2}}\left[
\frac{\cos^2(\pi k)}{1+2\cos^2(\pi k)}
+
\frac{\sin^2(\pi k)}{1+2\cos^2(\pi k)}
\right].
\end{split}\end{equation}
where $C_\ell$ are finite constants for $n_1,..,n_\ell\geq 2$.  If
$n_i=1$ for some $i$, then the l. h. s. of (\ref{in:prob}) is of order
$o(N^{-\ell})$.

Assume without loss of generality,  $i_1<i_2<..<i_\ell$.h
Then
\begin{equation}\begin{split}
& N^\ell \; \P\left[e_1 |\psi_{i_1}|>N^{2/3}\tau_1,...,
  e_l|\psi_{i_\ell}|>N^{2/3}\tau_\ell\right]\\
&= N^\ell
\int_\T d\mu(k_0)
\int_0^\infty dz_1\; e^{-z_1}
\int_\T dk_1\;
p^{i_1}(k_0,k_1)\1_{\{|\psi(k_1)|>N^{2/3}\tau_1 z_1^{-1}\}}\\
& \quad \times
\int_0^\infty dz_2\; e^{-z_2}
\int_\T
dk_2\;p^{i_2-i_1}(k_1,k_2)\1_{\{|\psi(k_2)|>N^{2/3}\tau_2
z_2^{-1}\}}
\int_0^\infty dz_3 e^{-z_3}\\
&\quad\times\int_\T...
\int_0^\infty dz_\ell e^{-z_\ell}
\int_\T
dk_\ell\;p^{i_\ell-i_{\ell-1}}(k_{\ell-1},k_\ell)\1_{\{|\psi(k_\ell)|>N^{2/3}\tau_\ell
z_\ell^{-1}\}}.
\end{split}\end{equation}
The proof of the lemma is now just an application of (\ref{in:prob}).
\end{proof}

%%%%%%%%%%%%%%%%%%%%%%%%%%%%%%%%%
%%%%%%%%%%%%%%%%%%%%%%%
Recalling the definition of $X_{i,N}^+$, $X_{i,N}^-$ given in
(\ref{def:X+X-}),
%We observe that for every
%$(i_1,..,i_\ell)\in\{1,..,N\}$, $1\leq\ell\leq N$,
by the symmetry of the
probability density we have
\begin{equation}\begin{split}
&\P\left[\big|X_{i_1,N}^+\big|>\tau_1,...,
\big|X_{i_\ell,N}^+\big|>\tau_\ell\right]=
\P\left[\big|X_{i_1,N}^-\big|>\tau_1,...,
\big|X_{i_\ell,N}^-\big|>\tau_\ell\right]\\
&=\P\left[e_1 \big|\psi_{i_1}\big|>N^{2/3}\tau_1,...,
e_l\big|\psi_{i_\ell}\big|>N^{2/3}\tau_\ell\right].
\end{split}\end{equation}
%for all sets of positive constants $(\tau_1,..,\tau_\ell)$. Then, in
%order to verify the conditions of Theorem \ref{theorem1}, it is enough
%to show that for every $\ell\geq 1$ and all sets of positive constants
%$(\tau_1,..,\tau_\ell)$
%\begin{equation}\label{prop8}
%\sum_{\alpha_N(\ell)}\P\left[e_1 \big|\psi_{i_1}\big|>N^{2/3}\tau_1,...,
%e_l\big|\psi_{i_\ell}\big|>N^{2/3}\tau_\ell\right]\to
%a_0^\ell\frac{1}{\tau_1^{3/2}}...\frac{1}{\tau_\ell^{3/2}},
%\hspace{0.4cm}N\to\infty,
%\end{equation}
%with some $a_0>0$.
Let us denote by $\sum_{\beta_N(\ell)}$ the sum over all not ordered
sequences of different indices $[i_1,..,i_\ell]$ with values in
$\{1,..,N\}$.  Then $\sum_{\alpha_N(\ell)}=\ell!\sum_{\beta_N(\ell)}$.

We choose $i_1< i_2 <...<i_\ell$ and by denoting with $m_1=i_1$,
$m_j=i_j-i_{j-1}$, $\forall j=2,..,\ell$, we get
\begin{equation}\begin{split}
& \sum_{\alpha_N(\ell)}\P\left[e_1 \big|\psi_{i_1}\big|>N^{2/3}\tau_1,...,
e_l\big|\psi_{i_\ell}\big|>N^{2/3}\tau_\ell\right]\\
 & =
\ell!\sum^{N}_{
 {m_1,..,m_\ell=1}\atop
 {m_1+..+m_\ell\leq N}}
\P\left[e_1 \big|\psi_{m_1}|>N^{2/3}\tau_1,...,
e_l\big|\psi_{(m_1+...+m_\ell)}\big|>N^{2/3}\tau_\ell\right].
\end{split}\end{equation}
Fixed $M\ll N$, we split the sum on $m_1$ into two parts:
\begin{equation}\label{sum}\begin{split}
& \sum^{N}_{
 {m_1,..,m_\ell=1}\atop
 {m_1+..+m_\ell\leq N}}
\P\left[e_1 \big|\psi_{m_1}|>N^{2/3}\tau_1,...,
e_l\big|\psi_{(m_1+...+m_\ell)}\big|>N^{2/3}\tau_\ell\right]\\
& =
\sum_{m_1=1}^M
\sum^{N}_{
{m_2,..,m_\ell=1}\atop
 {m_1+..+m_\ell\leq N}}
\P\left[e_1 \big|\psi_{m_1}\big|>N^{2/3}\tau_1,...,
e_l\big|\psi_{(m_1+...+m_\ell)}\big|>N^{2/3}\tau_\ell\right]\\
& \quad +
\sum_{m_1=M+1}^{N-\ell+1}
\sum^{N}_{
{m_2,..,m_\ell=1}\atop
{m_1+..+m_\ell\leq N}}
\P\left[e_1 \big|\psi_{m_1}\big|>N^{2/3}\tau_1,...,
e_l\big|\psi_{(m_1+...+m_\ell)}\big|>N^{2/3}\tau_\ell\right]
\end{split}\end{equation}
%Let us consider the first sum.
Using Lemma \ref{lemma1},
we find that the first sum on the r. h. s. of (\ref{sum}) is bounded by
$\tilde{C}_\ell \tau_1^{-3/2}...\tau_\ell^{-3/2}M/N$,
with $\tilde{C}_\ell>0$.
Let us consider the second sum on the r. h. s. of (\ref{sum}).
We split the sum on $m_2$ into two
parts and we get
\begin{eqnarray}
& & \sum_{m_1=M+1}^{N-\ell+1}
 \sum^{N}_{
{m_2,..,m_\ell=1}\atop {m_1+..+m_\ell\leq N}}
\P\left[e_1 \big|\psi_{m_1}\big|>N^{2/3}\tau_1,...,
e_l\big|\psi_{(m_1+...+m_\ell)}\big|>N^{2/3}\tau_\ell\right]\\
& & \nonumber =
 \sum_{m_1=M+1}^{N-\ell+1}
 \sum_{m_2=M+1}^{N-\ell+1}
 \sum^{N}_{
 {m_3,..,m_\ell=1}\atop{m_1+..+m_\ell\leq N}}
\P\left[e_1 \big|\psi_{m_1}\big|>N^{2/3}\tau_1,...,
e_l\big|\psi_{(m_1+...+m_\ell)}\big|>N^{2/3}\tau_\ell\right]\\
& & \nonumber \quad+
\mathcal{O}(M/N).
\end{eqnarray}

By repeating this procedure for all the sums, finally we get
\begin{eqnarray}\label{eq1}
  & & \sum^{N}_{{m_1,..,m_\ell=1}\atop
    {m_1+..+m_\ell\leq N}}
  \P\left[e_1 \big|\psi_{m_1}|>N^{2/3}\tau_1,...,
  e_l\big|\psi_{(m_1+...+m_\ell)}\big|>N^{2/3}\tau_\ell\right]\\
  & &\nonumber=
  \sum^{N}_{{m_1,..,m_\ell=M+1}\atop
    {m_1+..+m_\ell\leq N}}
  \P\left[e_1 \big|\psi_{m_1}\big|>N^{2/3}\tau_1,...,
  e_l\big|\psi_{(m_1+...+m_\ell)}\big|>N^{2/3}\tau_\ell\right]+\mathcal{O}(M/N).
\end{eqnarray}

Now we show that in the last expression we can replace the probability $\P$ with the
invariant measure $\pi$. We have
\begin{equation}\label{eq2}\begin{split}
& \sum^{N}_{{m_1,..,m_\ell=M+1}\atop
  {m_1+..+m_\ell\leq N}}
\P\left[e_1 \big|\psi_{m_1}\big|>N^{2/3}\tau_1,...,
e_l\big|\psi_{(m_1+...+m_\ell)}\big|>N^{2/3}\tau_\ell\right]\\
& =
\sum^{N}_{{m_1,..,m_\ell=M+1}\atop
{m_1+..+m_\ell\leq N}}
\int_\T d\mu(k_0)\int_0^\infty dz_1
\int_\T dk_1\; p^{m_1}(k_0, k_1)\1_{\{|\psi(k_1)|>
N^{2/3}\tau_1 z_1^{-1}\}}\\&\quad\times\int_\T...
\int_0^\infty dz_\ell\int_\T dk_\ell\;\phi(k_\ell)\1_{\{|\psi(k_\ell)|>
N^{2/3}\tau_\ell z_\ell^{-1}\}}\\
 & \quad+
\sum^{N}_{{m_1,..,m_\ell=M+1}\atop
{m_1+..+m_\ell\leq N}}
\int_\T d\mu(k_0)\int_0^\infty dz_1\int_\T dk_1\; p^{m_1}(k_0, k_1)\1_{\{|\psi(k_1)|>
  N^{2/3}\tau_1  z_1^{-1}\}}\\&\quad\times\int_\T...
\int_0^\infty dz_\ell\int_\T  dk_\ell\; [p^{m_\ell}(k_{\ell-1},
k_\ell)-\phi(k_\ell)]
\1_{\{|\psi(k_\ell)|>
  N^{2/3}\tau_\ell z_\ell^{-1}\}}.
\end{split}\end{equation}
Let us consider the second sum on the r.h.s. of (\ref{eq2}). We have
\begin{eqnarray}
&&    \left|\int_0^\infty dz_\ell\int_\T dk_\ell\;
      \left[p^{m_\ell}(k_{\ell-1}, k_\ell)-\phi(k_\ell)\right]
      \1_{\{|\psi(k_\ell)|>
        N^{2/3}\tau_\ell z_\ell^{-1}\}}\right| \\
&&\nonumber \leq   \sup_{k,k'\in\T}\left\{\phi(k')^{-1}\big|p^{m_\ell}(k,k')-\phi(k')\big|\right\}
    \int_0^\infty dz_\ell\int_\T dk_\ell\; \phi(k_\ell)
    \1_{\{|\psi(k_\ell)|> N^{2/3}\tau_\ell z_\ell^{-1}\}},
\end{eqnarray}
which is finite since $p^m(k,k')\phi(k')^{-1}\in
\mathbb{C}^\infty(\T\times\T)$ for all $m\geq 1$. Moreover, since
by direct computation
$$
\int_0^\infty dz\;\int_\T dk\;
\phi(k)
\1_{\{|\psi(k)|>
N^{2/3}\tau z^{-1}\}}\leq C_0\frac{1}{N}\frac{1}{\tau^{3/2}},
$$
with $C_0>0$, using Lemma \ref{lemma1}, we find that the second sum
in (\ref{eq2}) is bounded, for  $N$ large enough, by
\begin{equation}\begin{split}
&\frac{(N-\ell-M)^\ell}{N^\ell}C_{\ell}\frac{1}{\tau_1^{3/2}}...
    \frac{1}{\tau_\ell^{3/2}}
\sup_{m\geq M+1}\sup_{k,k'\in\T}
    \left\{\phi(k')^{-1} \big|p^{m}(k,k')-\phi(k')\big|\right\}\\
& \leq
    \tilde{C}_\ell\frac{1}{\tau_1^{3/2}}...  \frac{1}{\tau_\ell^{3/2}}
 \sup_{m\geq  M+1}
\sup_{k,k'\in\T}\left\{\phi(k')^{-1}\big|p^{m}(k,k')-\phi(k')\big|\right\}.
\end{split}\end{equation}
Now we consider the first sum of (\ref{eq2}).
By repeating this procedure $\ell$-times,
we can replace the transition probability
densities $p^m_i(k,k')$, $i=1,..,\ell-1$,
with the invariant density $\phi(k')$.
This gives an error $\mathcal{E}_N(\ell,M,\tau_1,..,\tau_\ell)$, which
satisfies the inequality
\begin{equation}\label{E}
\big|\mathcal{E}_N(\ell,M,\tau_1,..,\tau_\ell)\big|\leq C_\ell\frac{1}{\tau_1^{3/2}}...
\frac{1}{\tau_\ell^{3/2}}
\sup_{m\geq
  M+1}\sup_{k,k'\in\T}\left\{\phi(k')^{-1}\big|p^{m}(k,k')-\phi(k')\big|\right\}.
\end{equation}
By ergodicity, for every $\ell\geq 1$ and
 all sets of constants $\tau_i>0$, $i=1,..,\ell$
\begin{equation}\label{lim_E}
\lim_{M\to\infty}\big|\mathcal{E}_N(\ell,M,\tau_1,..,\tau_{\ell})\big|=0,
\quad\forall N\in\mathbb{N}.
\end{equation}

Then, for  $N$
large enough,
\begin{eqnarray}
&& \sum^{N}_{{m_1,..,m_\ell=1}\atop
  {m_1+..+m_\ell\leq N}}
\P\left[e_1 \big|\psi_{m_1}\big|>N^{2/3}\tau_1,...,
  e_l\big|\psi_{(m_1+...+m_\ell)}\big|>N^{2/3}\tau_\ell\right]\\
&&\nonumber  =\sum^{N}_{{m_1,..,m_\ell=M+1}\atop
   {m_1+..+m_\ell\leq N}}
 \Pi_{i=1}^{\ell}
\left(
\int_0^\infty dz\;\int_\T dk\; \phi(k)\1_{\{|\psi(k)|>
N^{2/3}\tau_i z^{-1}\}}
\right)\\
&&\nonumber \quad +
\mathcal{E}_N(\ell,M,\tau_1,..,\tau_\ell)
+\mathcal{O}(M/N)\\
&& \nonumber =
\sum^{N}_{{m_1,..,m_\ell=M+1}\atop
 {m_1+..+m_\ell\leq N}}
 \left[
\frac{a^\ell}{N^\ell}\frac{1}{\tau_1^{3/2}}...\frac{1}{\tau_\ell^{3/2}}+\mathcal{O}(N^{-5/3})
\right]+\mathcal{E}_N(\ell,M,\tau_1,..,\tau_\ell)
+\mathcal{O}(M/N).
\end{eqnarray}
We take the limit $N,M\to \infty$ with $M/N\to 0$. The proof of the
proposition follows by (\ref{lim_E})
and by relation
$$
\lim_{N\to\infty}\sum^{N}_{
{m_1,..,m_\ell=1}\atop
 {m_1+..+m_\ell\leq N}}
N^{-\ell}=1/\ell!
$$

%%%%%%%%%%%%%%%%%%%%%%%%%%%%%%%%%%%%%%
%%%%%%%%%%%%%%%%%%%%%%%%%%%%%%%%%%%%%%%%%
\section{Tightness.} In this section we conclude the proof of Theorem
\ref{theo:conv}
 by proving
 tightness of the sequence $\st$. This is relatively easy due to the
strong convergence properties stemming from the weak convergence of the
 point processes $\cR_N$.

As criterion for tightness
in the $J_1$ topology we use  slight variant of  Theorem 13.2 (with
condition 13.5 replaced by 13.8) from \cite{Bi}.

We define  the modulo of continuity
on $D$
\begin{equation}\begin{split}
%&
w_f(\delta)=\sup\{(|f(t)-f(t_1)|\wedge|f(t_2)-f(t)|): t_1\leq
t\leq t_2, t_2-t_1\leq \delta\}.
%\\ &v_f(\delta)=\sup\{|f(t_1)-f(t_2)|:t_1,t_2\in[0,T]\cap (t-\delta, t+\delta)\}.
\end{split}\end{equation}
The sequence $\{P_n\}$ of probability measures on $(D, \mathcal{D})$
is tight in the $J_1$-topology if and only if
\begin{itemize}
\item[(i)]
For each positive $\epsilon$ there exist $\tau$ such that
\begin{equation}\label{i}
P_n[f:\|f\|_\infty> \tau]\leq \epsilon,\hspace{1cm}n\geq 1.
\end{equation}
\item[(ii)]For each $\epsilon >0$ and $\eta >0$, there exist $\delta>0$,
and a integer $n_0$ such that $\forall n\geq n_0$
\begin{equation}
P_n[f:w_f(\delta)\geq \eta]\leq \epsilon,\hspace{1cm}n\geq n_0,
\end{equation}
and
\begin{eqnarray}
& &\label{ii1}
P_n[f:w_f(\delta)\geq \eta]\leq \epsilon,\\
 & &\label{ii2}
P_n[f:|f(\delta)-f(0)|\geq \eta]\leq \epsilon,\\
& &\label{ii3}
P_n[f:|f(\mathcal{T})-f(\mathcal{T}-\delta)|\geq \eta]\leq \epsilon.
\end{eqnarray}
\end{itemize}

In order to verify these conditions,
we start with some definitions and preliminary results.
For every $0\leq r\leq s\leq t\leq \mathcal{T}$, let us denote by
\begin{equation}\label{mrst}
m_{rst}=  \min\left(\big|\st(s)-\st(r)\big|,\big|\st(t)-\st(s)\big|\right)
\end{equation}
and
\begin{equation}\label{L_T}
L_\mathcal{T}= \sup_{0\leq r\leq s\leq t\leq \mathcal{T}}m_{rst}
\end{equation}
The following inequalities holds (see \cite{Bi}, (10.4), (10.6)):
\begin{eqnarray}
&& \label{ineq1}
\sup_{\theta\in[0,\mathcal{T}]}|\st(\theta)| \leq L_\mathcal{T}
+ |\st(\mathcal{T})|,\\
&&\label{ineq2}
\sup_{\theta\in[0,\mathcal{T}]}|\st(\theta)|\leq 3 L_\mathcal{T} +
\max_{0\leq i\leq [N\mathcal{T}]-1}N^{-2/3}e_i|\psi_i|.
\end{eqnarray}

Fix $N\geq 1$. We have
\begin{equation}\label{ineq3}
\P\left[ \sup_{\theta\in[0,\mathcal{T}]}\st(\theta)>\tau\right]\leq
\P\left[ \sup_{\theta\in[0,\mathcal{T}]}\sti^<(\theta)>\tau/2\right]+
\P\left[ \sup_{\theta\in[0,\mathcal{T}]}\sti^>(\theta)>\tau/2\right],
\end{equation}
where
\begin{equation}\label{def:sti}\begin{split}
\sti^<(\theta) &
=\frac{1}{N^{2/3}}\sum_{i=0}^{[N\theta]-1}e_i\psi_i\1_{\{e_i|\psi_i|<N^{2/3}\}}
\\
\sti^>(\theta) &
=\frac{1}{N^{2/3}}\sum_{i=0}^{[N\theta]-1}e_i\psi_i\1_{\{e_i|\psi_i|>N^{2/3}\}}.
\end{split}\end{equation}
% $\st^>, \st^<$  defined in (\ref{def:S+S-}).
%\emph{In the proof of (i) we do not need to send $c\to 0$, so we
  %consider $c$ fixed.}
Let us consider inequality (\ref{ineq1}) with $\sti$ replaced by
$\sti^<$.  We have
\begin{equation}\label{ineq4}
\P\left[ \sup_{\theta\in[0,T]}\sti^<(\theta)>\tau/2\right]\leq
\P\left[ L^<_T> \tau/4\right]+\P\left[\big|\sti^<(T)\big|>\tau/4\right].
\end{equation}
%where
%\begin{equation}\begin{split}
%L^<_T= &\sup_{0\leq r\leq s\leq t\leq T}m^<_{rst},\\
%m^<_{rst}= & \min\left(|\st^<(s)-\st^<(r)|,|\st^<(t)-\st^<(s)|\right).
%\end{split}\end{equation}
%
By direct computation, for all $0\leq r\leq s\leq t\leq \mathcal{T}$ we have
by Lemma \ref{ESS}
\begin{equation}\label{ineqm<}\begin{split}
\P\left[m^<_{rst}\geq \tau/4\right]\leq
\P\left[\big|\sti^<(s)-\sti^<(r)\big|\big|\sti^<(t)-\sti^<(s)\big|
\geq \tau^2/16\right]\\
\leq \left(\frac{16}{\tau^2}\right)^{2}
\E\left[\big|\sti^<(s)-\sti^<(r)\big|^2
    \big|\sti^<(t)-\sti^<(s)\big|^2\right]\leq \frac{A}{\tau^4}(t-r)^2.
\end{split}\end{equation}
Then we can use Theorem 10.3 of \cite{Bi} to get
\begin{equation}\label{ineqL<}
\P\left[L^<_T\geq \tau/4\right]\leq \frac{B}{\tau^4} \mathcal{T}^2,
\end{equation}
where $B$ is a  constant. Moreover, by inequality (\ref{S-})
with $c=1$, $\theta=\mathcal{T}$ we get
\begin{equation}\label{ineq5}
  \P\left[\big|\sti^<(\mathcal{T})\big|\geq \tau/4\right]
  \leq
  \frac{16}{\tau^2}\E\left[\big|\sti^<(\mathcal{T})\big|^2\right]
  \leq \frac{D}{\tau^2}\mathcal{T},
\end{equation}
with some constant $D$.  Finally by (\ref{ineq4}), (\ref{ineqL<})
and (\ref{ineq5}), we get that for every $N\geq 1$
\begin{equation}\label{boundsti<}
  \P\left[\sup_{\theta\in[0,\mathcal{T}]}\sti^<(\theta)\geq \tau/2\right]\leq
  \frac{B}{\tau^4} \mathcal{T}^2
  + \frac{D}{\tau^2}\mathcal{T}.
\end{equation}

In order to estimate
$\P\left[\sup_{\theta\in[0,\mathcal{T}]}\sti^>(\theta)> \tau/2\right]$, we observe
that
$$
\sup_{\theta\in[0,\mathcal{T}]}\sti^>(\theta)\leq \frac{1}{N^{2/3}}
\sum_{i=0}^{[N\mathcal{T}]-1}e_i|\psi_i|\1_{\{e_i|\psi_i|>N^{2/3}\}}.
$$
Then by a straightforward computation (see Lemma \ref{mom1})
\begin{equation}\begin{split}
&\P\left[\sup_{\theta\in[0,\mathcal{T}]}\sti^>(\theta)>
      \tau/2\right]
\leq \frac{2}{\tau}\frac{1}{N^{2/3}}
    \sum_{i=0}^{[N\mathcal{T}]-1}\E\left[e_i|\psi_i|\1_{\{e_i|\psi_i|>N^{2/3}\}}\right]\\
 & \leq
    \frac{2}{\tau}\frac{1}{N^{2/3}}N\mathcal{T}\frac{K_0}{N^{1/3}}=
    2K_0\mathcal{T}\frac{1}{\tau},
\end{split}\end{equation}
with constants $K_0$ . This inequality with (\ref{ineq3}) and
(\ref{boundsti<}) prove (\ref{i}).

Now we prove (\ref{ii1}). Again, for each $\eta>0$
\begin{equation}\label{ineq9}
\P\left[ w_{\sti}(\delta)\geq \eta\right]\leq
\P\left[ w_{\sti^<}(\delta)\geq \eta/2\right] +
\P\left[ w_{\sti^>}(\delta)\geq \eta/2\right].
\end{equation}
To estimate $\P\left[ w_{\sti^<}(\delta) \geq \eta/2\right]$, we divide
$[0,\mathcal{T}]$ in  intervals of length $\delta>0$.
If $ w_{\sti^<}(\delta)\geq \eta/2$, then exists $k$,
$1\leq k <\mathcal{T}/\delta$, such that
\begin{equation*}\begin{split}
   & \sup\left\{
      \min\left(\big|\sti^<(s)-\sti^<(r)\big|,\big|\sti^<(t)-\sti^<(s)\big|
\right),\;(k-1)\delta\leq r\leq s\leq t\leq
      (k+1)\delta\right\}\\
   & \geq\eta/2.
\end{split}\end{equation*}
Observe that
$$
\min\left(\big|\sti^<(s)-\sti^<(r)\big|,\big|\sti^<(t)-\sti^<(s)\big|\right)=m^<_{rst}.
$$
Thus, by (\ref{ineqm<}),
\begin{equation}
\P\left[m^<_{rst}\geq \eta/2 \right]\leq A\frac{1}{\eta^4}(t-r)^2,
\hspace{1cm}(k-1)\delta\leq r\leq t\leq (k+1)\delta.
\end{equation}
Hence
\begin{equation}
\P\left[\sup\left\{m^<_{rst}: (k-1)\delta\leq r\leq s\leq t\leq
(k+1)\delta\right\} \geq \eta/2\right]\leq \frac{C_0}{\eta^4}\delta^2,
\end{equation}
with some constant $C_0$  (see  Theorem 10.3 in \cite{Bi}).
Using this result we get
\begin{equation}\label{ineq10}\begin{split}
&\P\left[ w_{\sti^<}(\delta)\geq \eta/2\right]\\
&\leq
\sum_{k=1}^{[\mathcal{T}/\delta]}\P\left[\sup\left\{m^<_{rst}: (k-1)\delta\leq r\leq s\leq t\leq
(k+1)\delta\right\}\geq \eta/2\right]\\
&\leq \frac{C_1}{\eta^4}\mathcal{T}\delta
\end{split}\end{equation}
with some constant $C_1$.

Now let us consider the quantity
$$m^>_{rst}=\min\left(\big|\sti^>(s)-\sti^>(r)\big|,\big|\sti^>(t)-\sti^>(s)\big|\right)$$
for $0\leq r\leq s\leq t\leq \mathcal{T}$, with $t-r<\delta$.  If
$m^>_{rst}>0$, then $e_i|\psi_i|>N^{2/3}$, $e_j|\psi_j|>N^{2/3}$ for
some $[r N]\leq i\leq [sN]-1$, $[s N]\leq j\leq [tN]-1$.
We get
\begin{equation}\label{ineq11}\begin{split}
&\P\left[ w_{\sti^>}(\delta) \geq \eta/2\right]\\
& \leq\P\left[\exists\;  i,j< N\mathcal{T}, |i-j|\leq 2\delta N:
e_i|\psi_i|>N^{2/3}, \;e_j|\psi_j|>N^{2/3} \right]\\
&\leq \sum_{i=0}^{[N\mathcal{T}]-1}\sum_{j=i+1}^{i+[2 N \delta]}
\P\left[e_i|\psi_i|>N^{2/3}, \;e_j|\psi_j|>N^{2/3} \right]\\
&\leq C\mathcal{T}\delta
\end{split}\end{equation}
 where  the last inequality follows by Lemma \ref{lemma1}.
Equation (\ref{ineq11}) together with (\ref{ineq10}) proves
(\ref{ii1}).

The proofs of equations (\ref{ii2}),(\ref{ii3}) are easier. We give only the
proof of (\ref{ii3}), since the other is similar. We have
\begin{equation}\begin{split}
&\P\left[\big|S_N(\mathcal{T})-S_N(\mathcal{T}-\delta)\big|\geq \eta\right]
 \\ & \leq
\P\left[\big|\sti^<(\mathcal{T})-\sti^<(\mathcal{T}-\delta)\big|\geq \eta/2\right]
+
\P\left[\big|\sti^>(\mathcal{T})-\sti^>(\mathcal{T}-\delta)\big|\geq \eta/2\right],
\end{split}\end{equation}
where, using a second moment estimate (essentially the same proof of
Lemma \ref{proof:S-}), we get
\begin{equation}\label{ineq12}\begin{split}
&\P\left[\big|\sti^<(\mathcal{T})-\sti^<(\mathcal{T}-\delta)\big|\geq
    \eta/2\right]\\
& = \P\left[ \left|\frac{1}{N^{2/3}}\sum_{n=\lfloor
          N(\mathcal{T}-\delta)\rfloor}^{\lfloor N\mathcal{T}\rfloor
          -1}e_n\psi_n\1_{\{|e_n\psi_n|\leq N^{2/3}\}}\right| \geq
      \eta/2
    \right]\\
& \leq \frac{4}{\eta^2}\frac{1}{N^{4/3}}
\E\left[
      {\left|\sum_{n=\lfloor N(\mathcal{T}-\delta)\rfloor}^{\lfloor
            N\mathcal{T}\rfloor -1}e_n\psi_n\1_{\{|e_n\psi_n|
            N^{2/3}\}} \right|}^2 \right]\\
&\leq\frac{C}{\eta^2}\delta.
\end{split}\end{equation}
 Moreover,
\begin{equation}\begin{split}
&\P\left[\big|\sti^>(\mathcal{T})-\sti^>(\mathcal{T}-\delta)\big|
   \geq \eta/2\right]\\
& \leq
 \P\left[ \frac{1}{N^{2/3}}\sum_{n=\lfloor N(\mathcal{T}-\delta)\rfloor}^{\lfloor
  N\mathcal{T}\rfloor -1}|e_n\psi_n|\;\1_{\{|e_n\psi_n|> N^{2/3}\}}
\geq \eta/2
\right]\\
& \leq \frac{2}{\eta}\frac{1}{N^{2/3}}\sum_{n=\lfloor N(\mathcal{T}-\delta)\rfloor}^{\lfloor
  N\mathcal{T}\rfloor -1}
\E\left[|e_n\psi_n|\;\1_{\{|e_n\psi_n|> N^{2/3}\}}
\right]\\
&\leq \frac{K}{\eta}\delta,
\end{split}\end{equation}
and this together with (\ref{ineq12}) proves (\ref{ii2}).

\section{Proof Theorem \ref{theo:conv2}}\label{proof:
conv2}
Let us consider the two rescaled processes
$T_N$, $T_N^{-1}$ defined in (\ref{def:TN.SN}).
We want to prove the convergence in probability of both  processes
to the function $\theta$.
We start with the following Lemma.
\begin{lemma}\label{conv0:TN}
Let $T_N$ be the process defined in (\ref{def:TN.SN}). Then
$\forall \varepsilon>0$
\begin{equation}
\lim_{N\to\infty} \P\left[\big|T_N(\theta)-\theta\big|>\varepsilon\right]=0,
\hspace{1cm}\forall \theta\in[0,\mathcal{T}].
\end{equation}
\end{lemma}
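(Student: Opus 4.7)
The plan is to prove a weak law of large numbers for the partial sums $T_n = \sum_{i=0}^{n-1} e_i \phi(X_i)^{-1}$. Since the chain $\{X_i\}$ is ergodic with invariant density $\phi$, the typical summand has mean $\int \phi(k)^{-1}\phi(k)dk = 1$ under stationarity, which should force $N^{-1} T_{\lfloor N\theta\rfloor} \to \theta$.

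\textbf{Step 1 (First moment).} I would compute
\[
\E[T_N(\theta)] = \frac{1}{N}\sum_{i=0}^{\lfloor N\theta\rfloor-1}
\int d\mu(k_0)\int dk\; p^i(k_0,k)\phi(k)^{-1}.
\]
For $i=0$ the term equals $\int d\mu(k_0)\phi(k_0)^{-1}$, which is finite by hypothesis (\ref{ass:mu}) since $\phi(k)^{-1}\sim k^{-2}$. For $i\ge1$, the explicit formula (\ref{form:prob}) shows $p^i(k_0,k)$ factorizes into terms of order $\sin^4(\pi k)$ and $\sin^2(\pi k)\cos^2(\pi k)$, hence $p^i(k_0,k)\phi(k)^{-1}$ is uniformly bounded in $k_0,k,i$, and by ergodicity $\int p^i(k_0,k)\phi(k)^{-1}dk \to 1$ as $i\to\infty$ uniformly in $k_0$. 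A Cesàro averaging argument then yields $\E[T_N(\theta)]\to\theta$.

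\textbf{Step 2 (Truncation and variance).} Because $\P[e_i\phi(X_i)^{-1}>\lambda]\sim\lambda^{-3/2}$ (the $3/2$-stable tail from (\ref{htd})), a direct second moment of $T_N(\theta)$ diverges. I would therefore introduce, for a truncation level $A>0$,
\[
\tilde T_N^A(\theta) = \frac{1}{N}\sum_{i=0}^{\lfloor N\theta\rfloor-1}
\bigl(e_i\phi(X_i)^{-1}\wedge A\bigr),
\]
and control the two pieces separately. The tail satisfies $\E\bigl|T_N(\theta)-\tilde T_N^A(\theta)\bigr| \le C\theta A^{-1/2}$ by the same tail estimate used for Lemma \ref{mom1}, applied with cutoff $A$ instead of $N^{2/3}$. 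The truncated summand is bounded by $A$, and the explicit expression (\ref{form:prob}) gives uniform geometric convergence $\sup_{k,k'}|\phi(k')^{-1}p^m(k,k')-1|\to 0$, which yields a covariance bound $|\Cov(e_i\phi(X_i)^{-1}\wedge A, e_j\phi(X_j)^{-1}\wedge A)|\le A^2 \rho^{j-i}$ with $\rho<1$. Summing gives $\Var(\tilde T_N^A(\theta))\le C_A/N$.

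\textbf{Step 3 (Conclusion).} Chebyshev's inequality combined with Step 2 gives $\tilde T_N^A(\theta) - \E[\tilde T_N^A(\theta)] \to 0$ in probability. Together with $\bigl|\E T_N(\theta)-\E\tilde T_N^A(\theta)\bigr|\le C\theta A^{-1/2}$, the $L^1$ bound on the tail, and Step 1, the triangle inequality gives
\[
\P\bigl[|T_N(\theta)-\theta|>\varepsilon\bigr]
\le \P\bigl[|\tilde T_N^A(\theta)-\E\tilde T_N^A(\theta)|>\varepsilon/3\bigr]
+ \frac{3C\theta}{\varepsilon\sqrt{A}} + o(1),
\]
so sending $N\to\infty$ and then $A\to\infty$ completes the proof. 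The main obstacle is the heavy-tailed nature of the summands: a naive second moment argument is unavailable, and the truncation must be reconciled with the first-moment calculation, which relies crucially on the explicit pointwise control of $p^i/\phi$ afforded by (\ref{form:prob}).
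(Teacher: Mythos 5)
Your proposal is correct, and it follows the same overall skeleton as the paper's proof: truncate the heavy-tailed summands, kill the tail piece by a first-moment (Markov) estimate, and handle the truncated piece by Chebyshev using the ergodicity of the explicit chain via (\ref{form:prob}). The implementations differ in two ways worth noting. First, the paper truncates at the $N$-dependent level $N^{2/3}$ (the same cutoff used throughout), which lets it reuse Lemma \ref{mom1} and the second-moment bound (\ref{ESpsi2}) verbatim and avoids your second limit $A\to\infty$; you truncate at a fixed level $A$ and take $N\to\infty$ first, which also works since your tail bound $C\theta A^{-1/2}$ is uniform in $N$. Second, for the off-diagonal covariances the paper never invokes a mixing \emph{rate}: it splits the double sum at a separation $M$, bounds the near-diagonal block by $\mathcal{O}(M/N)$, controls the far block by $\sup_{m>M}\sup_{k,k'}\phi(k')^{-1}|p^m(k,k')-\phi(k')|$, and sends $N,M\to\infty$ with $M/N\to 0$; you instead assert a geometric bound $|\Cov|\leq A^2\rho^{j-i}$, which buys a cleaner summable variance estimate $C_A/N$ but requires more than the paper proves. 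That geometric rate is in fact justifiable from (\ref{form:prob}): the coefficients satisfy the linear recursion with the strictly positive matrix $(a_2,b_2;c_2,d_2)$, whose Perron eigenvalue is $1$ (by $3a_m+b_m=3$, $3c_m+d_m=1$) with second eigenvalue of modulus strictly less than $1$, so $\sup_{k,k'}\phi(k')^{-1}|p^m(k,k')-\phi(k')|$ decays geometrically — but you should say this, since plain weak ergodicity does not by itself give your $\rho^{j-i}$. Two trivial touch-ups: in Step 2 the $i=0$ term should be split off and bounded via hypothesis (\ref{ass:mu}) (it contributes $\mathcal{O}(1/N)$), and the uniform-in-$i$ tail estimate applies only to $i\geq 1$.
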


\begin{proof} Le us denote by $\E_\pi$ the expectation value with
respect to the invariant measure $\pi$. Since
$\E_\pi\left[e_i\phi(X_i)^{-1}\right]=1$, $\forall i=0,1,...$, then
\begin{equation*}
\P\left[\big|T_N(\theta)-\theta\big|>\varepsilon\right]\leq
\P\left[\frac{1}{N} \Big|\sum_{i=0}^{\scriptstyle{\lfloor N\theta\rfloor -1}}
\left(e_i\phi(X_i)^{-1}-\E_\pi\left[e_i\phi(X_i)^{-1}\right]\right)\Big|
>\varepsilon-\frac{1}{N} \right].
\end{equation*}
We denote by $\varepsilon'=\varepsilon-N^{-1}$, which is positive
for $N$ large enough.
Let us introduce the following notations:
\begin{equation}\label{def:tau}
\tau_i^< \equiv e_i\phi(X_i)^{-1}\1_{\{e_i\phi(X_i)^{-1}\leq N^{2/3}\}},
\hspace{0.4cm}
\tau_i^> \equiv e_i\phi(X_i)^{-1}\1_{\{e_i\phi(X_i)^{-1}>N^{2/3}\}},
\end{equation}
$\forall i=0,1,...$.
We have
\begin{equation}\label{T1}\begin{split}
& \P\left[\frac{1}{N} \Big|\sum_{i=0}^{\scriptstyle{\lfloor N\theta\rfloor -1}}
\left(e_i\phi(X_i)^{-1}-\E_\pi\left[e_i\phi(X_i)^{-1}\right]\right)\Big|
>\varepsilon'\right]\\
& \leq
\P\left[\frac{1}{N}
\Big|\sum_{i=0}^{\scriptstyle{\lfloor N\theta\rfloor -1}}
\left(\tau_i^< -1\right)\Big|
>\frac{\varepsilon'}{2}\right]
+
\P\left[\frac{1}{N}
   \sum_{i=0}^{\scriptstyle{\lfloor N\theta\rfloor -1}}
   \tau_i^>
   >\frac{\varepsilon'}{2}\right].
\end{split}\end{equation}
By a first moment estimation (see Lemma \ref{mom1}), we get
\begin{equation*}
\E\left[N^{-1}
      \sum_{i=0}^{\scriptscriptstyle{\lfloor N\theta\rfloor -1}}
        \tau_i^>
   \right]\leq A_0\mathcal{T}\frac{1}{N^{1/3}},
\end{equation*}
with $A_0 <\infty$, thus we can neglect the second term on the r.h.s. of
(\ref{T1}).
For the first term we have
\begin{equation}\label{T2}\begin{split}
&\P\left[\frac{1}{N}
\Big|\sum_{i=0}^{\scriptstyle{\lfloor N\theta\rfloor -1}}
\left(\tau_i^< -1\right)\Big|
>\frac{\varepsilon'}{2}\right]\\
&\leq
\frac{4}{\varepsilon^2}\frac{1}{N^2}
    \sum_{i=0}^{\scriptstyle{\lfloor N\theta\rfloor -1}}
    \E \left[\left(\tau_i^< -1 \right)^2\right]
+    \frac{4}{\varepsilon^2}\frac{1}{N^2}
\sum_{\scriptstyle{i,j=0,\; i\neq j}}^{\scriptstyle{\lfloor
    N\theta\rfloor -1}}
    \E \left[\tau_i^<  \tau_j^< - \E_\pi\left[\tau_i^<\right]\E_\pi \left[\tau_j^<\right]
\right]\\
& \quad +\mathcal{O}(N^{-1/3}),
\end{split}\end{equation}
where, using a second moment estimate
(see for example (\ref{ESpsi2})),
\begin{equation}\begin{split}
\frac{4}{\varepsilon^2}\frac{1}{N^2}
    \sum_{i=0}^{\scriptstyle{\lfloor N\theta\rfloor -1}}
    \E \left[\left(\tau_i^< -1 \right)^2\right]
   \leq \frac{1}{\varepsilon^2}B_0\mathcal{T}\frac{1}{N^{2/3}}.
   %+\mathcal{O}(N^{-1}).
\end{split}\end{equation}

%We rewrite  the second sum in (\ref{T2}) in the following way:
Let us consider the second sum in (\ref{T2}). For fixed $1<M<N$, we split
it into three parts:
\begin{equation}\label{T3}\begin{split}
& \sum_{i,j=0,\; i\neq j}^{\lfloor
    N\theta\rfloor -1}
    \E \left[\tau_i^<  \tau_j^< - \E_\pi\left[\tau_i^<\right]\E_\pi \left[\tau_j^<\right]
\right]\\
& =
%\frac{4}{\varepsilon^2}\frac{1}{N^2}
\sum_{{i,j=0}\atop{1\leq|j-i|\leq M}}
    ^{\lfloor N\theta\rfloor -1}
  \E \left[ \tau_i^<  \tau_j^< - \E_\pi\left[\tau_i^<\right]\E_\pi
  \left[\tau_j^<\right] \right]
+
%\frac{4}{\varepsilon^2}\frac{1}{N^2}
\sum_{i=0}^{M-1}
\sum_{{j=0}\atop{|j-i|>M}}
    ^{\lfloor N\theta\rfloor -1}
  \E \left[ \tau_i^<  \tau_j^< - \E_\pi\left[\tau_i^<\right]\E_\pi
    \left[\tau_j^<\right] \right]\\
& \quad+
%\frac{4}{\varepsilon^2}\frac{1}{N^2}
\sum_{i=M}^{\lfloor
    N\theta\rfloor -1}
\sum_{{j=0}\atop{|j-i|>M}}
    ^{\lfloor N\theta\rfloor -1}
\E \left[ \tau_i^<  \tau_j^< - \E_\pi\left[\tau_i^<\right]\E_\pi
    \left[\tau_j^<\right] \right].
\end{split}\end{equation}
For every $i,m\geq 1$,  we have
\begin{equation*}
%\label{Etau}
\begin{split}
\E\left[ \tau_i^<  \tau_{i+m}^<   \right]
=  & \int_0^\infty dz\; z e^{-z}
\int_0^\infty du\; u e^{-u}
\int_\T\;d\mu(k_0)\int_\T dk\;p^i(k_0,k)
\phi(k)^{-1}\1_{\{z \phi(k)^{-1}<N^{2/3}\}}\\
 & \times\int_\T dk'\; p^m(k,k')\phi(k')^{-1}\1_{\{z \phi(k')^{-1}<N^{2/3}\}}.
\end{split}\end{equation*}
Using (\ref{form:prob}) one finds that $\E \left[ \tau_i^<
\tau_{i+m}^<\right]\sim\mathcal{O}(1) $, thus the first and the second
sum in (\ref{T3}) are $\mathcal{O}(M/N)$. Moreover since
$\pi(dk)=\phi(k)dk$, one get
\begin{equation*}\begin{split}
& \E\left[ \tau_i^<  \tau_{i+m}^<   \right]\\
& =
\E_\pi\left[ \tau_i^<  \tau_{i+m}^<   \right]
+
\int_0^\infty dz\; e^{-z} z
\int_\T d\mu(k_0)
\int_\T dk\;p^i(k_0,k)
\phi(k)^{-1}\1_{\{z \phi(k)^{-1}<N^{2/3}\}}\\
& \quad\times
\int_0^\infty du\; e^{-u} u
\int_\T dk'\; \left(p^m(k,k')-\phi(k')\right)\phi(k')^{-1}
\1_{\{u \phi(k')^{-1}<N^{2/3}\}}\\
& \quad +
\int_0^\infty dz\;  e^{-z} z
\int_\T\;d\mu(k_0)
\int_\T dk\;\left(p^i(k_0,k)-\phi(k)\right)\phi(k)^{-1}
\1_{\{z \phi(k)^{-1}<N^{2/3}\}}\E_\pi\left[ \tau_{i+m}^< \right],
\end{split}\end{equation*}
and then $\forall i,m\geq 1$
\begin{equation*}
\Big|\E\left[ \tau_i^<  \tau_{i+m}^<   \right]-
\E_\pi\left[ \tau_i^< \right] \E_\pi\left[ \tau_{i+m}^<   \right]
\Big|
\leq C\int_\T dk \max_{i>M}\sup_{k_0\in\T}\left\{\phi(k)^{-1}
\big|  p^i(k_0,k)-\phi(k)  \big|\right\}.
\end{equation*}
Thus the third sum in (\ref{T3}) is bounded by
$$
\varepsilon^{-2}\mathcal{T}^2 C\int_\T dk \max_{i>M}\sup_{k_0\in\T}\left\{\phi(k)^{-1}
\big|  p^i(k_0,k)-\phi(k)  \big|\right\},
$$
which, by ergodicity, goes to zero for $M\to\infty$. We prove the
Lemma choosing $N, M\to \infty$, with $M/N\to 0$.
\end{proof}

%%%%%%%%%%%%%%%%%%%%%%%%%%%%%%%%%%%%%%%%%%%%%%%%%%%%%%
%%%%%%%%%%%%%%%%%%%%%%%%%%%%%%%%%%%%%%%%%%%%%%%%%

Now we prove that the processes $T_N(\theta)$,
$T_N^{-1}(\theta)$  converge in probability (and thus in
distribution) to the function $\theta$.
\begin{proposition}\label{conv:TN}
Let $T_N$, $T_N^{-1}$ be the processes defined in
(\ref{def:TN.SN}). Then $\forall \epsilon>0$, $\forall \delta>0$ $\exists
N_0$ such that $\forall N\geq N_0$
\begin{equation}\label{conv:TN.1}
\P\left[\sup_{\theta\in[0,\mathcal{T}]}\left\{\big|T_N(\theta)-\theta\big|\right\}>\epsilon
\right]<\delta,
\end{equation}
and
\begin{equation}\label{conv:TN-1}
\P\left[\sup_{\theta\in[0,\mathcal{T}]}\left\{\big|T_N^{-1}(\theta)-\theta\big|\right\}>\epsilon
\right]<\delta.
\end{equation}
\end{proposition}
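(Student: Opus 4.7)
The plan is to upgrade the pointwise convergence given by Lemma \ref{conv0:TN} to uniform convergence by exploiting the monotonicity of both $T_N(\theta)$ and $T_N^{-1}(\theta)$, together with the fact that the limit function $\theta \mapsto \theta$ is continuous and strictly increasing. This is a Dini/Pólya-type argument; the only subtlety is transferring the uniform statement from $T_N$ to its inverse.

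For (\ref{conv:TN.1}), I would argue as follows. Since $T_n = \sum_{i=0}^{n-1} e_i \phi(X_i)^{-1}$ is a sum of nonnegative terms, the step function $\theta \mapsto T_N(\theta) = N^{-1} T_{\lfloor N\theta \rfloor}$ is almost surely nondecreasing on $[0,\mathcal{T}]$. Fix $\epsilon, \delta > 0$ and choose an integer $m$ with $\mathcal{T}/m < \epsilon/2$. Set $\theta_k = k\mathcal{T}/m$ for $k = 0, \ldots, m$. By Lemma \ref{conv0:TN} applied finitely many times, there exists $N_0$ such that for every $N \geq N_0$ the event
\begin{equation*}
\mathcal{A}_N = \bigcap_{k=0}^{m} \{|T_N(\theta_k) - \theta_k| \leq \epsilon/2\}
\end{equation*}
has probability at least $1-\delta$. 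On $\mathcal{A}_N$, for any $\theta \in [\theta_k, \theta_{k+1}]$ the monotonicity of $T_N$ gives
\begin{equation*}
T_N(\theta_k) - \theta_{k+1} \leq T_N(\theta) - \theta \leq T_N(\theta_{k+1}) - \theta_k,
\end{equation*}
and both sides are bounded in absolute value by $\epsilon/2 + \mathcal{T}/m < \epsilon$, yielding (\ref{conv:TN.1}).

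For (\ref{conv:TN-1}), I would use the duality between $T^{-1}$ and $T$ coming from the definition $T^{-1}(t) = \inf\{n : T_n \geq t\}$. Applied on the enlarged interval $[0,\mathcal{T}+1]$, the uniform statement (\ref{conv:TN.1}) says that with probability at least $1-\delta$, $|T_N(\alpha) - \alpha| \leq \epsilon$ for all $\alpha \in [0, \mathcal{T}+1]$ (provided $N$ is large enough). On this event, for $\theta \in [0,\mathcal{T}]$ one has $T_N(\theta + 2\epsilon) \geq \theta + \epsilon > \theta$, hence $T_N^{-1}(\theta) \leq \theta + 2\epsilon + 1/N$; and $T_N(\theta - 2\epsilon) \leq \theta - \epsilon < \theta$, hence $T_N^{-1}(\theta) \geq \theta - 2\epsilon$ (after checking the $1/N$ boundary correction from the ceiling in the definition of $T^{-1}$). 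Taking the supremum over $\theta$ and shrinking $\epsilon$ gives (\ref{conv:TN-1}).

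The main obstacle is minor: it is the careful handling of the $1/N$ discretisation errors induced by the floor and the right-continuous inverse in the definitions of $T_N$ and $T_N^{-1}$, together with ensuring the estimate on $T_N$ holds on an interval slightly larger than $[0,\mathcal{T}]$ so that one can sandwich $T_N^{-1}(\theta)$ from both sides for every $\theta \in [0,\mathcal{T}]$. Nothing probabilistic beyond Lemma \ref{conv0:TN} is required.
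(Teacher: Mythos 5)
Your proof is correct and uses essentially the same ingredients as the paper's: monotonicity of $T_N$ and $T_N^{-1}$, a finite grid on $[0,\mathcal{T}]$, the pointwise Lemma \ref{conv0:TN}, and the sandwich $T_{[T^{-1}(t)-1]}<t\le T_{[T^{-1}(t)]}$ to pass between the process and its inverse. The only difference is organisational: you first make the bound on $T_N$ uniform (on a slightly enlarged interval) and then transfer it to $T_N^{-1}$, whereas the paper discretises $T_N^{-1}$ directly and reduces each grid-point event to a pointwise statement about $T_N$ at a shifted deterministic time — both are the same Dini-type argument.
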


\begin{proof} We give the proof only for
(\ref{conv:TN-1}), the proof of (\ref{conv:TN.1}) is similar.

Fixed $\eta>0$, we divide  $[0,\mathcal{T}]$ in
$\lfloor \mathcal{T}/\eta \rfloor +1$ intervals of length less or
equal than $\eta$. We observe that
\begin{equation}\begin{split}
\P\left[\sup_{\theta\in[0,\mathcal{T}]}\left\{\big|T_N^{-1}(\theta)-\theta
\big|\right\}>\epsilon\right] \leq
& \;\P\left[\sup_{\theta\in[0,\mathcal{T}]}\left\{T_N^{-1}(\theta)-\theta
  \right\}>\epsilon\right]\\
& +
\P\left[\sup_{\theta\in[0,\mathcal{T}]}\left\{\theta-T_N^{-1}(\theta)
\right\}>\epsilon\right].
\end{split}\end{equation}
We have
\begin{equation}\begin{split}
& \P\left[\sup_{\theta\in[0,\mathcal{T}]}\left\{T_N^{-1}(\theta)-\theta
\right\}>\epsilon\right] \leq \sum_{k=0}^{\lfloor\mathcal{T}/\eta\rfloor}
 \P\left[\sup_{\theta\in[k\eta, (k+1)\eta]}\left\{T_N^{-1}(\theta)-\theta
 \right\}>\epsilon\right]\\
& \leq \sum_{k=0}^{\lfloor \mathcal{T}/\eta\rfloor}\P\left[
 \left\{T_N^{-1}[(k+1)\eta]-(k+1)\eta
 \right\}>\epsilon-\eta\right]\\
&=
 \sum_{k=1}^{\lfloor \mathcal{T}/\eta\rfloor+1}\P\left[
 T_N^{-1}(k\eta)-k\eta
 >\epsilon-\eta\right]
\end{split}\end{equation}
where we used the fact that $T^{-1}_N$ is
non-decreasing.
For every $\epsilon>0$,
we choose $\eta$ such that $\epsilon'\equiv \epsilon-\eta$ is
positive, and we rewrite the relation $ T_N^{-1}(k\eta)-k\eta
 >\epsilon'$ as
\begin{equation}\label{ineq:T-1}
T^{-1}(N k\eta)>Nk\eta + N\epsilon',
\end{equation}
where $T^{-1}(t)$ is defined in (\ref{def:T-1}).
By definition  the following relation holds:
\begin{equation}
T_{[T^{-1}(N k\eta)-1]}<Nk\eta\leq
T_{[T^{-1}(Nk\eta)]},
\end{equation}
then inequality (\ref{ineq:T-1}) implies
\begin{equation}
Nk\eta>T_{\lfloor Nk\eta+N\epsilon'-1\rfloor},
\end{equation}
and thus
\begin{equation}\begin{split}
&\sum_{k=1}^{\lfloor \mathcal{T}/\eta\rfloor +1}\P\left[
 T_N^{-1}(k\eta)-k\eta
 >\epsilon'\right]\\
&\leq \sum_{k=1}^{\lfloor \mathcal{T}/\eta\rfloor +1}
 \P\left[ (k\eta +\epsilon'-N^{-1})-T_N(k\eta
 +\epsilon'- N^{-1})  > \epsilon'-N^{-1}
 \right].\end{split}
\end{equation}
In the same way one can easily prove  that
\begin{equation}
\P\left[\sup_{\theta\in[0,\mathcal{T}]}\left\{\theta-T_N^{-1}(\theta)
\right\}>\epsilon\right]
\leq
\sum_{k=0}^{\lfloor \mathcal{T}/\eta\rfloor}
  \P\left[ T_N(k\eta-\epsilon')-(k\eta-\epsilon')> \epsilon'\right].
\end{equation}
By Lemma \ref{conv0:TN}, $\forall \epsilon'>0$, $\forall \delta'>0$,
$\exists N_0$ such that $\P\left[ \big|T_N(\theta)-\theta\big| >
\epsilon'\right]<\delta'$, $\forall N\geq N_0$, for all
$\theta\in[0,\mathcal{T}]$, and this conclude the proof of
(\ref{conv:TN-1}).
\end{proof}

In order to prove that $S_N\circ T_N^{-1}$ converges in distribution
to the L\'evy process $V$,  we have just to show that the
distance between the two process $S_N$, $S_N\circ T_N^{-1}$ goes to
zero in probability (see Theorem 3.1 in \cite{Bi}). By
denoting with $\rho(\cdot,\cdot)$ the distance in the Skorokhod $J_1$
topology,
\begin{equation}\label{dist1}\begin{split}
\lim_{N\to\infty}\P\left[ \rho(S_N, S_N\circ T_N^{-1})>\epsilon
\right]
\leq\P\left[ \sup_{t\in[0,\mathcal{T}]}
\big|S_N(\lambda_N(t))-S_N\circ T_N^{-1}(t)\big|>\epsilon\right],
\end{split}\end{equation}
where $\lambda_N:[0,\mathcal{T}]\to [0,\mathcal{T}]$ is a sequence
of increasing homeomorphisms such that $\|\lambda_N -
I\|_{\infty}\to 0$. We have
\begin{equation*}\begin{split}
& \P\left[ \sup_{t\in[0,\mathcal{T}]} \big|S_N(\lambda_N(t))-S_N\circ
T_N^{-1}(t)\big|>\epsilon\right]\\
& \leq
  \P\left[ \left\{\sup_{t\in[0,\mathcal{T}]}
\big|S_N(\lambda_N(t))-S_N\circ T_N^{-1}(t)\big|>\epsilon\right\} \cap
\left\{\sup_{t\in[0,\mathcal{T}]}\big|T_N^{-1}(t)-t\big|\leq
\sigma\right\}\right]\\
& \quad + \P\left[
\sup_{t\in[0,\mathcal{T}]}\big|T_N^{-1}(t)-t\big|>\sigma\right],
\end{split}
\end{equation*}
for every $\sigma>0$. The second term on the r.h.s. goes to zero for
$N\to\infty$ (see Proposition \ref{conv:TN}), while the first term
is bounded by
$$
\P\left[\sup\left\{\big|S_N(\lambda_N(t))-S_N(s)\big|,
  |t-s|\leq\sigma,\; t,s\in[0,\mathcal{T}] \right\}>\epsilon\right],$$
which goes to zero for the tightness (see (\ref{ii1})) and the fact
that $\|\lambda_N - I\|_{\infty}\to 0$.

It remains to prove the convergence of the process
$Y_N$ to the stable process $V$. The basic idea is that the
step-function sequence $\{S_N(\theta)\}_{0\leq\theta\leq
  \mathcal{T}}$ and the continuous-time sequence, given by the linear
interpolation of $\{S_N(\theta)\}$, are \emph{asymptotically equivalent},
i.e. if either converges in distribution as $\N\to\infty$, then so
should the other, and they should have the same limit.
This is proved, for example, in  \cite{Wh}, Section 6.2.
Then one can easily extend this result to
$\{S_N(T_N^{-1})\}_{0\leq\theta\leq \mathcal{T}}$ and
$\{Y_N\}_{0\leq\theta\leq \mathcal{T}}$, since the waiting time
between two jumps has finite average.

%%%%%%%%%%%%%%%%%%%%%%%%%%%%%%%%%%%%%%%%%%%%%%%%%%%%%%%%%%%%%%%%%%%%%%%%
%%%%%%%%%%%%%%%%%%%%%%%%%%%%%%%%%%%%%%%%%%%%%%%%%%%%%%%%%%%%%%%%%%%%%%%%

\end{document}